\pgfplotsset{compat=newest}
\theoremstyle{plain}
\newtheorem{Theorem}{Theorem}[section]
\newtheorem{Lemma}{Lemma}[section]
\newtheorem{Corollary}{Corollary}[section]
\newtheorem{Remark}{Remark}[section]
\theoremstyle{Definition}
\newtheorem{Definition}{Definition}[section]
\numberwithin{equation}{section}
\definecolor{pomcol}{rgb}{1,0,0}
\definecolor{dbcol}{rgb}{0,0,0.8}
\newcommand{\subjclass}[2][]{
  \let\@oldtitle\@title%
  \gdef\@title{\@oldtitle\footnotetext{#1 
	\emph{MSC $(2020):$} #2}}
}
\newcommand{\keywords}[1]{
  \let\@@oldtitle\@title%
  \gdef\@title{\@@oldtitle\footnotetext{\emph{Keywords:} #1}}
}
\begin{document}

\title{{Hankel Determinant for a General Subclass of $m$-Fold Symmetric Bi-Univalent Functions Defined by Ruscheweyh Operators}}

\date{}

\author[1]{Pishtiwan Othman Sabir} 
\author[2]{Ravi P. Agarwal} 
\author[1]{Shabaz Jalil Mohammed Faeq} 
\author[3]{Pshtiwan Othman Mohammed\thanks{Corresponding author. Email: \texttt{pshtiwansangawi@gmail.com}}}
\author[4]{Nejmeddine Chorfi} 
\author[5]{Thabet Abdeljawad\thanks{Corresponding author. Email: \texttt{tabdeljawad@psu.edu.sa}}}

\affil[1]{{\small Department of Mathematics, College of Science, University of Sulaimani, Sulaymaniyah 46001, Kurdistan Region, Iraq}}
\affil[2]{{\small Department of Mathematics, Texas A\& M University-Kingsville, Kingsville, TX, 78363, USA}}
\affil[3]{{\small Department of Mathematics, College of Education, University of Sulaimani, Sulaimani 46001, Kurdistan Region, Iraq}}
\affil[4]{{\small Department of Mathematics, College of Science, King Saud University, P.O. Box 2455, Riyadh 11451, Saudi Arabia}}
\affil[5]{{\small Department of Mathematics and Sciences, Prince Sultan University, P.O. Box 66833, 11586 Riyadh, Saudi Arabia}}

\keywords{Analytic and univalent functions; $m$-fold symmetric univalent functions; $m$-fold symmetric bi-univalent functions; Ruscheweyh derivative; Hankel determinant}
\subjclass{30C45; 30C50; 26A51; 26B05; 15A15}

\maketitle

\begin{abstract}
Making use of the Hankel determinant and the Ruscheweyh derivative, in this work, we consider a general subclass of $m$-fold symmetric normalized bi-univalent functions defined in the open unit disk. Moreover, we investigate the bounds for the second Hankel determinant of this class and some consequences of the results are presented. In addition, to demonstrate the accuracy on some functions and conditions, most general programs are written in Python V.3.8.8 (2021).
\end{abstract}

\section{Introduction}
Let $\mathcal{A}$ denote the class of the analytic functions $f$ in the open unit disk $\mathbb{U}=\{z \in \mathbb{C}:|z|<1\}$, normalized by the conditions $f(0)=f^{\prime}(0)-1=0$ of the Taylor-Maclaurian series expansion

\begin{equation}
    f(z)=z+\sum_{n=2}^{\infty} a_{k} z^{k}.
\end{equation}

Further, assume that $\mathcal{S}$ denotes the subclass of $\mathcal{A}$ which contains all univalent functions in $\mathbb{U}$ satisfying (1.1) and  $\mathcal{P}$ represents the subclass of all functions $h(z)$ of the form

\begin{equation}
    h(z)=1+h_{1} z+h_{2} z^{2}+h_{3} z^{3}+\cdots
\end{equation}
which are analytic in the open unit disk $\mathbb{U}$ and $\operatorname{Re}(h(z))>0, z \in \mathbb{U}$.

For a function $f \in \mathcal{A}$ defined by (1.1), the Ruscheweyh derivative operator (see \cite{A1}) is defined by

$$
\mathcal{R}^{\gamma} f(z)=z+\sum_{k=2}^{\infty} \Omega(\gamma, k) a_{k} z^{k},
$$
where $\delta \in \mathbb{N}_{0}=\{0,1,2, \ldots\}=\mathbb{N} \cup\{0\}, z \in \mathbb{U}$, and
$$
\Omega(\gamma, k)=\frac{\Gamma(\gamma+k)}{\Gamma(k) \Gamma(\gamma+1)}.
$$

The Koebe 1/4-theorem (see \cite{A2}) asserts that every univalent function $f \in \mathcal{S}$ has an inverse $f^{-1}$ defined by

$$
f^{-1}(f(z))=z \quad(z \in \mathbb{U}) \text { and } f\left(f^{-1}(w)\right)=w \quad\left(|w|<r_{0}(f), r_{0}(f) \geq \frac{1}{4}\right).
$$

The inverse function $g=f^{-1}$ has the form

\begin{equation}
    g(w)=f^{-1}(w)=w-a_{2} w^{2}+\left(2 a_{2}^{2}-a_{3}\right) w^{3}-\left(5 a_{2}^{3}-5 a_{2} a_{3}+a_{4}\right) w^{4}+\cdots.
\end{equation}

A function $f \in \mathcal{A}$ is said to be bi-univalent if both $f$ and $f^{-1}$ satisfy the univalent property. The class of bi-univalent functions in $\mathbb{U}$ is denoted by $\Sigma$. Some examples of functions in the class $\Sigma$ are given as follows:

$$
\frac{z}{1-z}, \quad-\log (1-z) \quad \text { and } \quad \frac{1}{2} \log \left(\frac{1+z}{1-z}\right),
$$
with the corresponding inverse functions

$$
\frac{e^{w}-1}{e^{w}}, \quad \frac{w}{1+w} \text { and } \quad \frac{e^{2 w}-1}{e^{2 w}+1},
$$
respectively.

Determination of the estimates for the Taylor-Maclaurin coefficients $a_{n}$ is a crucial problem in geometric function theory and provides knowledge about the geometric characteristics of these functions. Lewin \cite{A3} investigated the class $\Sigma$ of bi-univalent functions and showed that $\left|a_{2}\right|<1.51$ for the functions belonging to the class $\Sigma$.  Brannan and Clunie \cite{A4} conjectured that $\left|a_{2}\right| \leq \sqrt{2}$. Subsequently, Netanyahu \cite{A5} showed that max $\left|a_{2}\right|=\frac{4}{3}$ for $f \in$ $\Sigma$. Srivastava et al. \cite{A6} improved the investigation for various subclasses of the bi-univalent function class $\Sigma$ and established bounds on $\left|a_{2}\right|$ and $\left|a_{3}\right|$ in recent years. Many recent works are devoted to studying the bi-univalent functions class $\Sigma$ and obtaining non-sharp bounds on the Taylor-Maclaurin coefficients $\left|a_{2}\right|$ and $\left|a_{3}\right|$ (see, for example, \cite{A8,A7,A28, A29,A30}). However, coefficient estimates bound of $\left|a_{n}\right|(n \in\{4,5,6, \ldots\})$ for a function $f \in \Sigma$ defined by (1.1) remains an open problem. In fact, there is no natural way to get upper bound for coefficients greater than three. In exceptional cases, there are some articles in which  Faber polynomial techniques were used for finding upper bounds for higher order coefficients (see, for example, \cite{A11,A10,A31}).

The Hankel determinant is a valuable tool in studying univalent functions whose components are coefficients of functions in the subclasses of $\mathcal{S}$. The Hankel determinants $H_{q}(n)$ $(n, q \in \mathbb{N})$ of the function $f$ are defined by (see \cite{A12})

$$
H_{q}(n)=\left|\begin{array}{cccc}
a_{n} & a_{n+1} & \cdots & a_{n+q-1} \\
a_{n+1} & a_{n+2} & \cdots & a_{n+q} \\
\vdots & \vdots & & \vdots \\
a_{n+q-1} & a_{n+q} & \cdots & a_{n}+2 q-2
\end{array}\right| \quad\left(a_{1}=1\right).
$$

Note

$$
H_{2}(1)=\left|\begin{array}{ll}
a_{1} & a_{2} \\
a_{2} & a_{3}
\end{array}\right|,
$$
and

$$
H_{2}(2)=\left|\begin{array}{ll}
a_{2} & a_{3} \\
a_{3} & a_{4}
\end{array}\right|.
$$

Estimates for upper bounds of $\left|H_{2}(1)\right|=\left|a_{3}-a_{2}^{2}\right|$ and $\left|H_{2}(2)\right|=\left|a_{2} a_{4}-a_{3}^{2}\right|$ are called Fekete-Szegö and second Hankel determinant problems, respectively. Additionally, Fekete and Szegö \cite{A13} proposed the summarized functional $a_{3}-\mu a_{2}^{2}$, in which $\mu$ is some real number. Lee et al. \cite{A14} presented a concise overview of Hankel determinants for analytic univalent functions and obtained bounds for $\mathrm{H}_{2}(2)$  for functions belonging to some classes defined by subordination. The estimation of $|\mathrm{H}_{2}(2)|$  has been the focus of recent Hankel determinant papers (see, for example, \cite{A17,A18,A16,A15,A32}).

For each function $f \in \mathcal{S}$, the function

\begin{equation}
    h(z)=\left(f\left(z^{m}\right)\right)^{\frac{1}{m}}, \quad(z \in \mathbb{U}, m \in \mathbb{N})
\end{equation}
is univalent and maps the unit disk into a region with $m$-fold symmetry. A function $f$ is said to be $m$-fold symmetric (see \cite{A19}) and denoted by $\mathcal{A}_{m}$, if it has the following normalized form:

\begin{equation}
    f(z)=z+\sum_{k=1}^{\infty} a_{m k+1} z^{m k+1}, \quad(z \in \mathbb{U}, m \in \mathbb{N}).
\end{equation}

We denote by $\mathcal{S}_{m}$ the class of $m$-fold symmetric univalent functions in $\mathbb{U}$, which are normalized by the series expansion (1.5). In fact, the functions in the class $\mathcal{S}$ are 1-fold symmetric. In view of the work of Koepf \cite{A19} the $m$-fold symmetric function $h \in \mathcal{P}$ is of the form

\begin{equation}
    h(z)=1+h_{m} z^{m}+h_{2 m} z^{2 m}+h_{3 m} z^{3 m}+\cdots.
\end{equation}

Analogous to the concept of $m$-fold symmetric univalent functions, Srivastava et al. \cite{A20} defined the concept of $m$ fold symmetric bi-univalent functions in a direct way. Each function $f \in \Sigma$ generates an $m$-fold symmetric bi-univalent function for each $m \in \mathbb{N}$. The normalized form of $f$ is given as (1.5) and the extension $g=f^{-1}$ is  as follows:

\begin{equation}
    \begin{aligned}
& g(w)=w-a_{m+1} w^{m+1}+\left[(m+1) a_{m+1}^{2}-a_{2 m+1}\right] w^{2 m+1} \\
&- {\left[\frac{1}{2}(m+1)(3 m+2) a_{m+1}^{3}-(3 m+2) a_{m+1} a_{2 m+1}+a_{3 m+1}\right] w^{3 m+1}+\cdots }.
\end{aligned}
\end{equation}

We denote by $\Sigma_{\mathrm{m}}$ the class of $m$-fold symmetric bi-univalent functions in $\mathbb{U}$. For $m=1$, the series (1.7) coincides with the series (1.3) of the class $\Sigma$. Some examples of $m$-fold symmetric bi-univalent functions are given as follows:

$$
\left[\frac{z^{m}}{1-z^{m}}\right]^{\frac{1}{m}}, \quad\left[-\log \left(1-z^{m}\right)\right]^{\frac{1}{m}} \quad \text { and } \quad\left[\frac{1}{2} \log \left(\frac{1+z^{m}}{1-z^{m}}\right)\right]^{\frac{1}{m}},
$$
with the corresponding inverse functions

$$
\left(\frac{w^{m}}{1+w^{m}}\right)^{\frac{1}{m}}, \quad\left(\frac{e^{w^{m}}-1}{e^{w^{m}}}\right)^{\frac{1}{m}} \quad \text { and } \quad\left(\frac{e^{2 w^{m}}-1}{e^{2 w^{m}}+1}\right)^{\frac{1}{m}},
$$
respectively.

Recently, some authors studied the $m$-fold symmetric bi-univalent function class $\Sigma_{\mathrm{m}}$ (see, for example, \cite{A22,A23,A21,A33}) and obtained non-sharp bound estimates on the first two Taylor-Maclaurin coefficients $\left|a_{m+1}\right|$ and $\left|a_{2 m+1}\right|$. In this respect, Altinkaya and Yalçin \cite{A24} obtained non-sharp estimates on the second Hankel determinant for the subclass $H_{\Sigma_{m}}(\beta)$ of the $m$-fold symmetric bi-univalent function class $\Sigma_{\mathrm{m}}$.

For a function $f \in \mathcal{A}_{m}$ defined by (1.5), analogous to the Ruscheweyh derivative $\mathcal{R}^{\gamma}: \mathcal{A} \rightarrow \mathcal{A}$, the $m$-fold Ruscheweyh derivative $\mathcal{R}^{\gamma}: \mathcal{A}_{m} \rightarrow \mathcal{A}_{m}$ is defined as follows (see \cite{A9}):
$$
\mathcal{R}^{\gamma} f(z)=z+\sum_{k=1}^{\infty} \frac{\Gamma(\gamma+k+1)}{\Gamma(k+1) \Gamma(\gamma+1)} a_{m k+1} z^{m k+1} \quad\left(\gamma \in \mathbb{N}_{0}, m \in \mathbb{N}, z \in \mathbb{U}\right).
$$

Considering the significant role of the Hankel determinant in recent years, the object of this paper, is to study estimates for $\left|H_{2}(2)\right|$ of a general subclass of $m$-fold symmetric bi-univalent functions in $\mathbb{U}$ by applying the $m$-fold Ruscheweyh derivative operator and to  obtain upper bounds on $\left|a_{m+1} a_{3 m+1}-a_{2 m+1}^{2}\right|$ for functions in the subclass $\Xi_{\Sigma_{m}}(\lambda, \gamma ; \beta)$.

In order to derive our main results, we need to the following lemmas that will be useful in proving the basic theorem of section 2.

\begin{Lemma}\cite{A2}
If the function $h \in \mathcal{P}$ is given by the series (1.2), then 

\begin{equation}
    \left|h_{k}\right| \leq 2\quad(k \in \mathbb{N}),
\end{equation}
and

\begin{equation}
    \left|h_{2}-\frac{h_{1}^{2}}{2}\right| \leq 2-\frac{\left|h_{2}\right|^{2}}{2}.
\end{equation}
\end{Lemma}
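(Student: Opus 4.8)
The two inequalities are classical structural facts about the Carath\'eodory class $\mathcal{P}$, and the plan is to prove them by linearizing the positivity constraint $\operatorname{Re}(h(z))>0$. For the uniform coefficient bound I would invoke the Herglotz representation: every $h\in\mathcal{P}$ of the form (1.2) can be written as
\begin{equation*}
h(z)=\int_{0}^{2\pi}\frac{1+e^{-it}z}{1-e^{-it}z}\,d\mu(t)
\end{equation*}
for some probability measure $\mu$ on $[0,2\pi)$. Expanding the kernel as a geometric series gives $h(z)=1+2\sum_{n\ge 1}\bigl(\int_{0}^{2\pi}e^{-int}\,d\mu(t)\bigr)z^{n}$, so that $h_{k}=2\int_{0}^{2\pi}e^{-ikt}\,d\mu(t)$. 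Since $\mu$ is a probability measure and $|e^{-ikt}|=1$, the triangle inequality yields $|h_{k}|\le 2\int_{0}^{2\pi}d\mu(t)=2$, which is (1.8).

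For (1.9) I would pass instead to the Schwarz function associated with $h$. Because $h$ maps $\mathbb{U}$ into the right half-plane with $h(0)=1$, the Cayley-type transform
\begin{equation*}
\phi(z)=\frac{h(z)-1}{h(z)+1}=c_{1}z+c_{2}z^{2}+\cdots
\end{equation*}
is analytic on $\mathbb{U}$, satisfies $\phi(0)=0$, and maps $\mathbb{U}$ into $\mathbb{U}$; that is, $\phi$ is a Schwarz function. Inverting gives $h=(1+\phi)/(1-\phi)=1+2\phi+2\phi^{2}+\cdots$, and comparing coefficients produces $h_{1}=2c_{1}$ and $h_{2}=2c_{2}+2c_{1}^{2}$. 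Substituting these into the left-hand side of (1.9) makes the quadratic terms cancel,
\begin{equation*}
h_{2}-\frac{h_{1}^{2}}{2}=\bigl(2c_{2}+2c_{1}^{2}\bigr)-\frac{(2c_{1})^{2}}{2}=2c_{2},
\end{equation*}
so the whole problem reduces to bounding $|c_{2}|$ for a Schwarz function.

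The crux of the argument, and the step I expect to require the most care, is the sharp coefficient estimate $|c_{2}|\le 1-|c_{1}|^{2}$. I would obtain it from the Schwarz--Pick lemma: by the Schwarz lemma $\phi(z)/z$ maps $\mathbb{U}$ into the closed unit disk and takes the value $c_{1}$ at the origin, so (assuming $|c_{1}|<1$, the boundary case being trivial since then $c_{2}=0$) composing with the disk automorphism $w\mapsto (w-c_{1})/(1-\overline{c_{1}}\,w)$ gives a self-map of $\mathbb{U}$ fixing $0$. Applying the Schwarz lemma to this composition and evaluating its derivative at $0$ yields exactly $|c_{2}|/(1-|c_{1}|^{2})\le 1$. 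Combining this with the identity above and the relation $|h_{1}|^{2}=4|c_{1}|^{2}$ gives
\begin{equation*}
\Bigl|h_{2}-\frac{h_{1}^{2}}{2}\Bigr|=2|c_{2}|\le 2\bigl(1-|c_{1}|^{2}\bigr)=2-\frac{|h_{1}|^{2}}{2},
\end{equation*}
which is (1.9) with the right-hand side $2-|h_{1}|^{2}/2$; the $h_{2}$ appearing on the right of the displayed statement seems to be a typographical slip for $h_{1}$. Together, (1.8) and the line above establish both assertions of the lemma.
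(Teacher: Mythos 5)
Your proposal is correct, but there is nothing in the paper to compare it against: Lemma 1.1 is quoted from Duren's book \cite{A2} and the paper supplies no proof of it. Your argument is a clean, self-contained derivation of the classical facts: the Herglotz representation gives $h_k = 2\int_0^{2\pi} e^{-ikt}\,d\mu(t)$ and hence $|h_k|\le 2$ immediately, and the Cayley transform $\phi=(h-1)/(h+1)$ reduces the second inequality to the Schwarz--Pick bound $|c_2|\le 1-|c_1|^2$ for Schwarz functions, which you establish correctly (including the degenerate case $|c_1|=1$). You are also right to flag the right-hand side of (1.9): as printed, with $|h_2|^2/2$, the inequality is false --- take $h(z)=(1+z^2)/(1-z^2)$, for which $h_1=0$, $h_2=2$, so the left side is $2$ while the printed right side is $0$. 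The correct classical statement, which is what your proof delivers and what the paper implicitly uses later (it never actually invokes (1.9) in the proof of Theorem 2.1, relying instead on Lemma 1.2 and $|p_m|\le 2$), is $\bigl|h_2-\tfrac{h_1^2}{2}\bigr|\le 2-\tfrac{|h_1|^2}{2}$.
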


\begin{Lemma}\cite{A25}
    If the function $h \in \mathcal{P}$ is given by the series (1.2), then

\begin{equation}
    2 h_{2}=h_{1}^{2}+x\left(4-h_{1}^{2}\right),
\end{equation}
and

\begin{equation}
    4 h_{3}=h_{1}^{3}+2\left(4-h_{1}^{2}\right) h_{1} x-h_{1}\left(4-h_{1}^{2}\right) x^{2}+2\left(4-h_{1}^{2}\right)\left(1-|x|^{2}\right) z,
\end{equation}
for some $x, z$ with $|x| \leq 1$ and $|z| \leq 1$.
\end{Lemma}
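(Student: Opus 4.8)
The plan is to reduce the Carathéodory function $h$ to its associated Schwarz function and then read off the coefficients, splitting the work according to order: the first identity will essentially be a restatement of Lemma 1.1, while the second requires a genuine third–order computation. I would dispose of (1.11) directly from the inequality (1.10). After replacing $h(z)$ by $h(e^{-i\theta}z)$ for a suitable $\theta$ — an operation preserving membership in $\mathcal{P}$ — I may assume $h_1$ is a nonnegative real number, so that (1.10) reads $\left|h_2-\tfrac12 h_1^2\right|\le\tfrac12(4-h_1^2)$. Since every complex number of modulus at most $R:=\tfrac12(4-h_1^2)$ is expressible as $Rx$ with $|x|\le1$, there is such an $x$ with $h_2-\tfrac12 h_1^2=\tfrac12(4-h_1^2)x$, which is exactly $2h_2=h_1^2+(4-h_1^2)x$.

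For (1.12) I would pass to the Schwarz function. Since $\Real h>0$ on $\mathbb{U}$, the map $w(z)=\dfrac{h(z)-1}{h(z)+1}$ is analytic with $|w(z)|<1$ and $w(0)=0$, hence $w(z)=w_1z+w_2z^2+w_3z^3+\cdots$. Inverting gives $h=\dfrac{1+w}{1-w}=1+2w+2w^2+2w^3+\cdots$, and comparing coefficients yields
\begin{equation*}
h_1=2w_1,\qquad h_2=2w_2+2w_1^2,\qquad h_3=2w_3+4w_1w_2+2w_1^3,
\end{equation*}
so in particular $w_1=\tfrac12 h_1$.

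The heart of the argument is then the coefficient parametrization of Schwarz functions furnished by the Schur algorithm (equivalently, the Carathéodory–Toeplitz positivity for the class $\mathcal{P}$): there exist $x,z$ with $|x|\le1$ and $|z|\le1$ such that
\begin{equation*}
w_2=(1-|w_1|^2)x,\qquad w_3=(1-|w_1|^2)\bigl[(1-|x|^2)z-\overline{w_1}\,x^2\bigr],
\end{equation*}
and — crucially — every $h\in\mathcal{P}$ arises this way, so the outcome is a genuine representation rather than a one–sided estimate. Substituting $w_1=\tfrac12 h_1$, for which $1-|w_1|^2=\tfrac14(4-h_1^2)$ in the normalization $h_1\in[0,2]$, into the expressions for $h_2$ and $h_3$ and simplifying reproduces (1.11) and, after multiplication by $4$, gives
\begin{equation*}
4h_3=h_1^3+2h_1(4-h_1^2)x-h_1(4-h_1^2)x^2+2(4-h_1^2)(1-|x|^2)z,
\end{equation*}
which is (1.12).

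The step I expect to be the main obstacle is establishing the third–order parametrization of $w_3$: one must carry out one further Schur step (or, equivalently, analyze the Toeplitz determinant condition $D_2\ge0$ together with its extremal structure), verify the bound $|z|\le1$, and confirm that the parametrization is surjective onto the full coefficient body of $\mathcal{P}$ so that the representation holds for \emph{every} $h$. The rest — expanding $(1+w)/(1-w)$, matching coefficients, and the final substitution — is routine algebra, the only delicate bookkeeping being that the $-\overline{w_1}\,x^2$ contribution is exactly what produces the $-h_1(4-h_1^2)x^2$ summand in (1.12).
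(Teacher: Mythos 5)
The paper never proves this lemma: it is quoted from Grenander--Szeg\"o \cite{A25} (in the explicit form that goes back to Libera and Z\l{}otkiewicz), where it is extracted from the Carath\'eodory--Toeplitz positivity conditions on the Toeplitz determinants formed from $h_1,h_2,h_3$. Your route --- passing to the Schwarz function $w=(h-1)/(h+1)$ and running two steps of the Schur algorithm --- is a genuinely different, self-contained derivation, and it checks out: the coefficient relations $h_1=2w_1$, $h_2=2w_2+2w_1^2$, $h_3=2w_3+4w_1w_2+2w_1^3$ are correct; the parametrization $w_2=(1-|w_1|^2)x$, $w_3=(1-|w_1|^2)\bigl[(1-|x|^2)z-\overline{w_1}\,x^2\bigr]$ is exactly what one obtains by applying a Schur step to $\phi(z)=w(z)/z$ (giving $x=\phi_1(0)=w_2/(1-|w_1|^2)$) and then the bound $|c_1|\le 1-|c_0|^2$ for analytic self-maps of the disk; and substitution reproduces (1.11)--(1.12) with a \emph{common} $x$ in both identities, which the statement requires and which your Schur derivation delivers automatically (your separate derivation of (1.11) from Lemma 1.1 yields the same $x=w_2/(1-|w_1|^2)$, so there is no inconsistency, just redundancy). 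The determinant route of the cited source generalizes to the full coefficient body of $\mathcal{P}$; your route is more elementary and makes the extremal structure transparent. One caveat deserves explicit mention: rotating $h(z)\mapsto h(e^{-i\theta}z)$ multiplies $h_k$ by $e^{-ik\theta}$, so undoing the rotation converts $4-h_1^2$ into $4-|h_1|^2$; the identities exactly as printed (with $4-h_1^2$) are therefore established only under the normalization $h_1\in[0,2]$. This is the implicit hypothesis of the classical lemma and is precisely how the paper applies it in Theorem 2.1 (where $p_m=\rho\in[0,2]$ without loss of generality), so it is a point to state rather than a gap. Note also that you silently corrected the misprint in (1.10), whose right-hand side should read $2-|h_1|^2/2$ rather than $2-|h_2|^2/2$.
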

 

\section{The Main Result and Consequences}
Our main result in this section, is to study estimates for the second Hankel determinant of the subclass $\Xi_{\Sigma_{m}}(\lambda, \gamma ; \beta)$ of $m$-fold symmetric bi-univalent functions in $\mathbb{U}$, and we show that our results are an improvement on the existing coefficient estimates.
\begin{Definition}
 A function $f \in \Sigma_{m}$ given by (1.5) is said to be in the class 

 $\Xi_{\Sigma_{m}}(\lambda, \gamma; \beta) \; (\lambda \geq 1,
 $ $
 \gamma \in \mathbb{N}_{0}, 0 \leq \beta<1$ and $m \in$ $\mathbb{N}$) if it satisfies the conditions

\begin{equation}
    \operatorname{Re}\left\{(1-\lambda) \frac{\mathcal{R}^{\gamma} f(z)}{z}+\lambda\left(\mathcal{R}^{\gamma} f(z)\right)^{\prime}\right\}>\beta,
\end{equation}
and

\begin{equation}
    \operatorname{Re}\left\{(1-\lambda) \frac{\mathcal{R}^{\gamma} f(w)}{w}+\lambda\left(\mathcal{R}^{\gamma} f(w)\right)^{\prime}\right\}>\beta,
\end{equation}
where $z, w \in \mathbb{U}$ and the function $g=f^{-1}$ is given by (1.7).
\end{Definition}

\begin{Theorem}
Let $f \in \Xi_{\Sigma_{m}}(\lambda, \gamma ; \beta)$ be given by (1.5). Then

$$
\left|a_{m+1} a_{3 m+1}-a_{2 m+1}^{2}\right| \leq\left\{\begin{array}{c}

\frac{4(1-\beta)^{2}}{(\gamma+1)^{2}(m \lambda+1)}\left[\frac{(m+1)^{2}(1-\beta)^{2}}{(\gamma+1)^{2}(m \lambda+1)^{3}}+\frac{6}{(\gamma+2)(\gamma+3)(3 m \lambda+1)}\right], \quad\qquad \beta \in[0, \tau] \\
\\
\frac{4(1-\beta)^{2}}{(\gamma+1)^{2}(\gamma+2)^{2}(2 m \lambda+1)^{2}}\left[4-\frac{\left[\omega_{2}(1-\beta)+9 \omega_{3}-4 \omega_{4}\right]^{2}}{\omega_{4}\left[\omega_{1}(1-\beta)^{2}-2 \omega_{2}(1-\beta)-12 \omega_{3}+4 \omega_{4}\right]}\right], \beta \in[\tau, 1)
\end{array}\right.
$$
where

\begin{eqnarray}
&&\omega_{1}:=(m+1)^{2}(\gamma+2)^{2}(\gamma+3)(2 m \lambda+1)^{2}(3 m \lambda+1),\\&&\omega_{2}:=m(\gamma+1)(\gamma+2)(\gamma+3)(m \lambda+1)^{2}(2 m \lambda+1)(3 m \lambda+1),\\&&\omega_{3}:=(\gamma+1)^{2}(\gamma+2)(m \lambda+1)^{3}(2 m \lambda+1)^{2},\\&&\omega_{4}:=(\gamma+1)^{2}(\gamma+3)(m \lambda+1)^{4}(3 m \lambda+1),
\end{eqnarray}
and

$
\qquad\qquad\qquad\qquad\quad\tau:=1-\frac{\omega_{2}+\sqrt{\omega_{2}^{2}+12 \omega_{1} \omega_{3}}}{2 \omega_{1}}.
$
\end{Theorem}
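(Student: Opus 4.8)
The plan is to run the classical Carath\'eodory-function argument, adapted to the $m$-fold symmetric setting through Lemmas 1.1 and 1.2. First I would record the Ruscheweyh weights attached to (1.5), namely $\Omega(\gamma,1)=\gamma+1$, $\Omega(\gamma,2)=\tfrac12(\gamma+1)(\gamma+2)$ and $\Omega(\gamma,3)=\tfrac16(\gamma+1)(\gamma+2)(\gamma+3)$, and compute
$$(1-\lambda)\frac{\mathcal{R}^\gamma f(z)}{z}+\lambda\bigl(\mathcal{R}^\gamma f(z)\bigr)' = 1 + A_1 a_{m+1}z^m + A_2 a_{2m+1}z^{2m} + A_3 a_{3m+1}z^{3m}+\cdots,$$
where $A_1=(\gamma+1)(m\lambda+1)$, $A_2=\tfrac12(\gamma+1)(\gamma+2)(2m\lambda+1)$ and $A_3=\tfrac16(\gamma+1)(\gamma+2)(\gamma+3)(3m\lambda+1)$ are precisely the structural constants hidden inside $\omega_1,\dots,\omega_4$. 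Conditions (2.1)--(2.2) then let me write the left-hand side as $\beta+(1-\beta)p(z)$ and the analogous expression for $g=f^{-1}$ as $\beta+(1-\beta)q(w)$, where $p,q\in\mathcal{P}$ are $m$-fold symmetric of the shape (1.6). Matching the coefficients of $z^m,z^{2m},z^{3m}$ in both identities, using (1.7) for the coefficients of $g$, produces the fundamental relations.

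The decisive algebraic step is to read off the right consequences. Equating the two $z^m$-relations gives $q_m=-p_m$ and $a_{m+1}=\tfrac{(1-\beta)p_m}{A_1}$. Adding and subtracting the two $z^{2m}$-relations produces, on one hand, the constraint $p_{2m}+q_{2m}=\tfrac{(m+1)A_2(1-\beta)}{A_1^2}\,p_m^2$ and, on the other, the symmetrized formula $a_{2m+1}=\tfrac{(m+1)(1-\beta)^2}{2A_1^2}p_m^2+\tfrac{(1-\beta)}{2A_2}(p_{2m}-q_{2m})$, while the $z^{3m}$-relations collapse to $a_{3m+1}=\tfrac{(1-\beta)p_{3m}}{A_3}$. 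It is essential to keep $a_{2m+1}$ in this symmetrized form rather than as $\tfrac{(1-\beta)p_{2m}}{A_2}$: retaining the correlation between $p$ and $q$ carried by the sum-constraint is exactly what sharpens the $a_{2m+1}^2$ contribution and allows the $p_m^2$-term to dominate.

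Substituting these into $H:=a_{m+1}a_{3m+1}-a_{2m+1}^2$ expresses $H$ through $p_m,p_{2m},q_{2m},p_{3m}$. I would then invoke Lemma 1.2 to set $2p_{2m}=p_m^2+x(4-p_m^2)$, together with its analogue for $q_{2m}$ and the cubic representation of $p_{3m}$, put $t:=|p_m|\in[0,2]$ after a rotation, and use the sum-constraint to eliminate $x+y$. Bounding the remaining auxiliary parameters via $|x|\le1$, $|\xi|\le1$ and the estimate $|p_{2m}-q_{2m}|\le\bigl(2-\tfrac{t^2}{2}\bigr)+\bigl(2-\tfrac{t^2}{2}\bigr)=4-t^2$ furnished by the second inequality of Lemma 1.1 (legitimate since $q_m^2=p_m^2$), the quantity $|H|$ is majorized by a single explicit function $\Phi_\beta(t)$ on $[0,2]$. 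At the endpoint the auxiliary data drop out because they multiply $4-t^2$, and one verifies $\Phi_\beta(2)=\bigl|a_{m+1}a_{3m+1}\bigr|+a_{2m+1}^2$ evaluated at the extremal Carath\'eodory datum, which is exactly the first branch.

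The crux, and the source of the two branches, is the one-variable optimization $\max_{t\in[0,2]}\Phi_\beta(t)$. For $\beta$ near $0$ the leading coefficient of $\Phi_\beta$ is positive, $\Phi_\beta$ increases, and the maximum sits at $t=2$, yielding the first branch. As $\beta\to1$ this leading coefficient changes sign, the maximizer migrates to an interior stationary point, and solving the stationarity condition (equivalently, completing the square) produces the second branch, whose rational form in $\omega_1,\dots,\omega_4$ encodes that critical value. One checks that the crossover value $\tau$ is precisely the root in $u=1-\beta$ of the quadratic $\omega_1 u^2-\omega_2 u-3\omega_3=0$, whose larger solution rearranges into $\tau=1-\tfrac{\omega_2+\sqrt{\omega_2^2+12\omega_1\omega_3}}{2\omega_1}$; this is the value of $\beta$ at which the endpoint and interior maxima coincide. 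I expect the main obstacle to be the bookkeeping needed to confirm that the interior critical point genuinely lies in $(0,2)$ throughout $\beta\in[\tau,1)$ and that $\tau$ is the exact crossover, the coefficient algebra preceding this being lengthy but routine.
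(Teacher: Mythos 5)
There is a genuine gap, and it sits exactly at the step you flag as a "collapse": your treatment of $a_{3m+1}$. You propose to keep $a_{2m+1}$ in symmetrized form (correct, and this matches the paper's (2.19)) but to take $a_{3m+1}=\tfrac{(1-\beta)p_{3m}}{A_3}$ from the $z^{3m}$ equation for $f$ alone. The paper does something essentially different: it also uses the $w^{3m}$ equation coming from $g=f^{-1}$ (its (2.16)), and subtracts it from (2.13). After inserting the expressions for $a_{m+1}$ and $a_{2m+1}$, this yields
$$
a_{3m+1}=\frac{(3m+2)(1-\beta)^{2}}{2(\gamma+1)^{2}(\gamma+2)(m\lambda+1)(2m\lambda+1)}\,p_{m}\bigl(p_{2m}-q_{2m}\bigr)
+\frac{3(1-\beta)}{(\gamma+1)(\gamma+2)(\gamma+3)(3m\lambda+1)}\bigl(p_{3m}-q_{3m}\bigr),
$$
and then in $H=a_{m+1}a_{3m+1}-a_{2m+1}^{2}$ the $p_m^2(p_{2m}-q_{2m})$ contributions from $a_{m+1}a_{3m+1}$ and from $-a_{2m+1}^{2}$ partially cancel: $\tfrac{3m+2}{2}-(m+1)=\tfrac{m}{2}$. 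This cancellation is the \emph{only} source of the factor $m$ in $\omega_2$, hence of the exact form of the second branch and of $\tau$. Your route never invokes the $q_{3m}$ equation, so this cancellation cannot occur: in your $H$ the coefficient of $p_m^{2}(p_{2m}-q_{2m})$ is $-(m+1)(1-\beta)^{3}/\bigl[(\gamma+1)^{3}(\gamma+2)(m\lambda+1)^{2}(2m\lambda+1)\bigr]$, and after your crude bound $|p_{2m}-q_{2m}|\le 4-t^{2}$ (which destroys any sign information) the one-variable majorant $\Phi_\beta(t)$ carries the coefficient $2(m+1)/m$ times the paper's. Concretely, where the paper's $K(\rho)$ has $\bigl[D_1+\tfrac{3C_3}{4}\bigr]\rho^{2}(4-\rho^{2})$ with $D_1\propto m$, yours has $\bigl[C_1'+\tfrac{3C_3}{4}\bigr]t^{2}(4-t^{2})$ with $C_1'=\tfrac{2(m+1)}{m}D_1>D_1$. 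The two functions agree at $t=2$ (so your first-branch value is right), but your interior maximum is strictly larger and occurs under a different parameter regime; the crossover you would obtain solves $\omega_1u^{2}-\tfrac{2(m+1)}{m}\omega_2 u-3\omega_3=0$, not $\omega_1u^{2}-\omega_2u-3\omega_3=0$. So for $\beta$ in the second range you prove a strictly weaker inequality than the theorem asserts, and your identification of the crossover with the stated $\tau$ is unjustified.

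A secondary problem: your proposed fix — "use the sum-constraint to eliminate $x+y$" — cannot rescue this, because in your decomposition $H$ involves only $p_m$, $p_{3m}$ and $p_{2m}-q_{2m}$; the quantity $p_{2m}+q_{2m}$ simply does not appear, so the constraint is idle unless you use it to re-express $y$ in terms of $x$ and $p_m$ and thereby re-correlate the $(p_{2m}-q_{2m})$ term with the $x$ inside $p_{3m}$ — which you never do, and which would in any case lead to yet another majorant, not the one whose maximization you describe. The repair is to work, as the paper does, with the differences $p_{2m}-q_{2m}$ \emph{and} $p_{3m}-q_{3m}$ (both $z^{3m}$-level equations), apply Lemma 1.2 to both $p$ and $q$ (parameters $x,y,z,w$), and only then run the two-variable maximization over $(|x|,|y|)\in[0,1]^{2}$ followed by the one-variable analysis in $\rho=|p_m|$; everything after that point in your plan (endpoint versus interior critical point, $\tau$ as the coincidence value) is structurally sound.
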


\begin{proof}
It follows from (2.1) and (2.2) that there exist $p$ and $q$ in the class $\mathcal{P}$ such that

\begin{equation}
    (1-\lambda) \frac{\mathcal{R}^{\gamma} f(z)}{z}+\lambda\left(\mathcal{R}^{\gamma} f(z)\right)^{\prime}=\beta+(1-\beta) p(z),
\end{equation}
and

\begin{equation}
    (1-\lambda) \frac{\mathcal{R}^{\gamma} f(w)}{w}+\lambda\left(\mathcal{R}^{\gamma} f(w)\right)^{\prime}=\beta+(1-\beta) q(z)
\end{equation}
where $p$ and $q$ are given by the series (1.6).

We also find that

\begin{equation}
    \begin{aligned}
(1-\lambda)& \frac{\mathcal{R}^{\gamma} f(z)}{z}+ \lambda\left(\mathcal{R}^{\gamma} f(z)\right)^{\prime} \\
& =1+(\gamma+1)(m \lambda+1) a_{m+1} z^{m}+\frac{1}{2}(\gamma+1)(\gamma+2)(2 \lambda m+1) a_{2 m+1} z^{2 m} \\
& +\frac{1}{6}(\gamma+1)(\gamma+2)(\gamma+3)(3 \lambda m+1) a_{3 m+1} z^{3 m}+\cdots,
\end{aligned}
\end{equation}
and

\begin{equation}
    \begin{aligned}
&(1-\lambda) \frac{\mathcal{R}^{\gamma} g(w)}{w}  +\lambda\left(\mathcal{R}^{\gamma} g(w)\right)^{\prime} \\
& =1-(\gamma+1)(m \lambda+1) a_{m+1} w^{m}+\frac{1}{2}(\gamma+1)(\gamma+2)(2 m \lambda +1)\times\\
& \left[(m+1) a_{m+1}^{2}-a_{2 m+1}\right] w^{2 m}-\frac{1}{6}(\gamma+1)(\gamma+2)(\gamma+3)(3 m \lambda +1)\times\\
& \left[\frac{1}{2}(m+1)(3 m+2) a_{m+1}^{3}-(3 m+2) a_{m+1} a_{2 m+1}+a_{3 m+1}\right] w^{3 m}+\cdots.
\end{aligned}
\end{equation}

Equating coefficients in (2.7) and (2.8)  we have

\begin{eqnarray}
   &(\gamma+1)(m \lambda+1) a_{m+1}=(1-\beta) p_{m},
\end{eqnarray}

\begin{eqnarray}
	\frac{1}{2}(\gamma+1)(\gamma+2)(2 \lambda m+1) a_{2 m+1}=(1-\beta) p_{2 m}, 
\end{eqnarray}

\begin{eqnarray}
	\frac{1}{6}(\gamma+1)(\gamma+2)(\gamma+3)(3 m \lambda+1) a_{3 m+1}=(1-\beta) p_{3 m}, 
\end{eqnarray}
and

\begin{eqnarray}
    -(\gamma+1)(m \lambda+1) a_{m+1}=(1-\beta) q_{m}, 
\end{eqnarray}

\begin{eqnarray}
    \frac{1}{2}(\gamma+1)(\gamma+2)(2 m \lambda+1)\left[(m+1) a_{m+1}^{2}-a_{2 m+1}\right]=(1-\beta) q_{2 m},
\end{eqnarray}

\begin{equation}
\begin{aligned}
&-\frac{1}{6}(\gamma+1)(\gamma+2)(\gamma+3)(3 m \lambda+1)\times\\&\left[\frac{1}{2}(m+1)(3 m+2) a_{m+1}^{3}-(3 m+2) a_{m+1} a_{2 m+1}+a_{3 m+1}\right]=(1-\beta) q_{3 m}.
\end{aligned}
\end{equation}

From (2.11) and (2.14), we get

\begin{equation}
    p_{m}=-q_{m},
\end{equation}
and

\begin{equation}
    a_{m+1}=\frac{1-\beta}{(\gamma+1)(m \lambda+1)} p_{m}.
\end{equation}

Now, from (2.12) and (2.15) and (2.18), we obtain

\begin{equation}
    a_{2 m+1}=\frac{(m+1)(1-\beta)^{2}}{2(\gamma+1)^{2}(m \lambda+1)^{2}} p_{m}^{2}+\frac{(1-\beta)}{(\gamma+1)(\gamma+2)(2 m \lambda+1)}\left(p_{2 m}-q_{2 m}\right).
\end{equation}

Also, from (2.13), (2.16), (2.18) and (2.19), we find that

\begin{equation}
    \begin{gathered}
a_{3 m+1}=\frac{(3 m+2)(1-\beta)^{2}}{2(\gamma+1)^{2}(\gamma+2)(m \lambda+1)(2 m \lambda+1)} p_{m}\left(p_{2 m}-q_{2 m}\right)\\+\frac{3(1-\beta)}{(\gamma+1)(\gamma+2)(\gamma+3)(3 m \lambda+1)}\left(p_{3 m}\right. 
\left.-q_{3 m}\right).
\end{gathered}
\end{equation}

Then, from (2.18), (2.19) and (2.20) we have that

\begin{equation}
    \begin{aligned}
& a_{m+1} a_{3 m+1}-a_{2 m+1}^{2}=-\frac{(m+1)^{2}(1-\beta)^{4}}{4(\gamma+1)^{4}(m \lambda+1)^{4}} p_{m}^{4} \\
&+\frac{m(1-\beta)^{3}}{2(\gamma+1)^{3}(\gamma+2)(m \lambda+1)^{2}(2 m \lambda+1)} p_{m}^{2}\left(p_{2 m}-q_{2 m}\right) \\
&+\frac{3(1-\beta)^{2}}{(\gamma+1)^{2}(\gamma+2)(\gamma+3)(m \lambda+1)(3 m \lambda+1)} p_{m}\left(p_{3 m}-q_{3 m}\right) \\
&-\frac{(1-\beta)^{2}}{(\gamma+1)^{2}(\gamma+2)^{2}(2 m \lambda+1)^{2}}\left(p_{2 m}-q_{2 m}\right)^{2}.
\end{aligned}
\end{equation}

According to Lemma 1.2 and (2.17), we can write

\begin{equation}
    p_{2 m}-q_{2 m}=\frac{4-p_{m}^{2}}{2}(x-y),
\end{equation}
and

\begin{equation}
    \begin{aligned}
    p_{3 m}-q_{3 m}=&\frac{p_{m}^{3}}{2}+\frac{p_{m}\left(4-p_{m}^{2}\right)}{2}(x+y)-\frac{p_{m}\left(4-p_{m}^{2}\right)}{4}\left(x^{2}+y^{2}\right)\\&+\frac{4-p_{m}^{2}}{2}\left[\left(1-|x|^{2}\right) z-\left(1-|y|^{2}\right) w\right],
\end{aligned}
\end{equation}

\begin{equation}
    p_{2 m}+q_{2 m}=p_{m}^{2}+\frac{4-p_{m}^{2}}{2}(x+y),
\end{equation}
for some $x, y, z$ and $w$ with $|x| \leq 1,|y| \leq 1,|z| \leq 1$ and $|w| \leq 1$. Using (2.22) and (2.23) in (2.21)  we obtain 

$$
\begin{aligned}
& \left|a_{m+1} a_{3 m+1}-a_{2 m+1}^{2}\right| \\
& =\mid-\frac{(m+1)^{2}(1-\beta)^{4}}{4(\gamma+1)^{4}(m \lambda+1)^{4}} p_{m}^{4}+\frac{m(1-\beta)^{3}}{4(\gamma+1)^{3}(\gamma+2)(m \lambda+1)^{2}(2 m \lambda+1)} p_{m}^{2}\left(4-p_{m}^{2}\right)(x-y) \\
& +\frac{3(1-\beta)^{2}}{(\gamma+1)^{2}(\gamma+2)(\gamma+3)(m \lambda+1)(3 m \lambda+1)} p_{m}\left[\frac{p_{m}^{3}}{2}+\frac{p_{m}\left(4-p_{m}^{2}\right)}{2}(x+y)\right. \\
& -\frac{p_{m}\left(4-p_{m}^{2}\right)}{4}\left(x^{2}+y^{2}\right)+\frac{4-p_{m}^{2}}{2}\left[\left(1-|x|^{2}\right) z-\left(1-|y|^{2}\right) w\right] \\
& -\frac{(1-\beta)^{2}}{4(\gamma+1)^{2}(\gamma+2)^{2}(2 m \lambda+1)^{2}}\left(4-p_{m}^{2}\right)^{2}(x-y)^{2} \mid \\
\\ & \leq \frac{(m+1)^{2}(1-\beta)^{4}}{4(\gamma+1)^{4}(m \lambda+1)^{4}} p_{m}^{4}+\frac{3(1-\beta)^{2}}{2(\gamma+1)^{2}(\gamma+2)(\gamma+3)(m \lambda+1)(3 m \lambda+1)} p_{m}^{4} \\
& +\frac{3(1-\beta)^{2}}{(\gamma+1)^{2}(\gamma+2)(\gamma+3)(m \lambda+1)(3 m \lambda+1)} p_{m}\left(4-p_{m}^{2}\right) \\
& +\left[\frac{m(1-\beta)^{3}}{4(\gamma+1)^{3}(\gamma+2)(m \lambda+1)^{2}(2 m \lambda+1)} p_{m}^{2}\left(4-p_{m}^{2}\right)\right. \\
& \left.+\frac{3(1-\beta)^{2}}{2(\gamma+1)^{2}(\gamma+2)(\gamma+3)(m \lambda+1)(3 m \lambda+1)} p_{m}^{2}\left(4-p_{m}^{2}\right)\right](|x|+|y|) \\
& +\left[\frac{3(1-\beta)^{2}}{4(\gamma+1)^{2}(\gamma+2)(\gamma+3)(m \lambda+1)(3 m \lambda+1)} p_{m}^{2}\left(4-p_{m}^{2}\right)\right. \\
& \left.-\frac{3(1-\beta)^{2}}{2(\gamma+1)^{2}(\gamma+2)(\gamma+3)(m \lambda+1)(3 m \lambda+1)} p_{m}\left(4-p_{m}^{2}\right)\right]\left(|x|^{2}+|y|^{2}\right) \\
& +\frac{(1-\beta)^{2}}{4(\gamma+1)^{2}(\gamma+2)^{2}(2 m \lambda+1)^{2}}\left(4-p_{m}^{2}\right)^{2}(|x|+|y|)^{2}.
\end{aligned}
$$

Since $p$ in the class $\mathcal{P}$, we have (Lemma 1.1) $\left|p_{m}\right| \leq 2$. Letting $p_{m}=\rho$, we may assume without loss of generality that $\rho \in[0,2]$. Thus, for $\mu_{1}=|x| \leq 1$ and $\mu_{2}=|y| \leq 1$, we get

$$
\left|a_{m+1} a_{3 m+1}-a_{2 m+1}^{2}\right| \leq F_{1}+F_{2}\left(\mu_{1}+\mu_{2}\right)+F_{3}\left(\mu_{1}^{2}+\mu_{2}^{2}\right)+F_{4}\left(\mu_{1}+\mu_{2}\right)^{2},
$$
where

\begin{eqnarray*}
   &&F_{1}=F_{1}(\rho)=\frac{(m+1)^{2}(1-\beta)^{4} \rho^{4}}{4(\gamma+1)^{4}(m \lambda+1)^{4}}+\frac{3(1-\beta)^{2} \rho^{4}}{2(\gamma+1)^{2}(\gamma+2)(\gamma+3)(m \lambda+1)(3 m \lambda+1)} \\&&\qquad\qquad\qquad
\quad+\frac{3(1-\beta)^{2} \rho\left(4-\rho^{2}\right)}{(\gamma+1)^{2}(\gamma+2)(\gamma+3)(m \lambda+1)(3 m \lambda+1)} \geq 0, \\&&
F_{2}=F_{2}(\rho)=\frac{m(1-\beta)^{3} \rho^{2}\left(4-\rho^{2}\right)}{4(\gamma+1)^{3}(\gamma+2)(m \lambda+1)^{2}(2 m \lambda+1)}\\&&\qquad\qquad\qquad+\frac{3(1-\beta)^{2} \rho^{2}\left(4-\rho^{2}\right)}{2(\gamma+1)^{2}(\gamma+2)(\gamma+3)(m \lambda+1)(3 m \lambda+1)} \geq 0, \\&&
F_{3}=F_{3}(\rho)=\frac{3(1-\beta)^{2} \rho^{2}\left(4-\rho^{2}\right)}{4(\gamma+1)^{2}(\gamma+2)(\gamma+3)(m \lambda+1)(3 m \lambda+1)}\\&&\qquad\qquad\qquad-\frac{3(1-\beta)^{2} \rho\left(4-\rho^{2}\right)}{2(\gamma+1)^{2}(\gamma+2)(\gamma+3)(m \lambda+1)(3 m \lambda+1)}
\leq 0, \\&& F_{4}=F_{4}(\rho)=\frac{(1-\beta)^{2}\left(4-\rho^{2}\right)^{2}}{4(\gamma+1)^{2}(\gamma+2)^{2}(2 m \lambda+1)^{2}} \geq 0. 
\end{eqnarray*}

\begin{figure}[H]
	\centering
	\begin{subfigure}[b]{0.45\linewidth}
	\includegraphics[width=\linewidth]{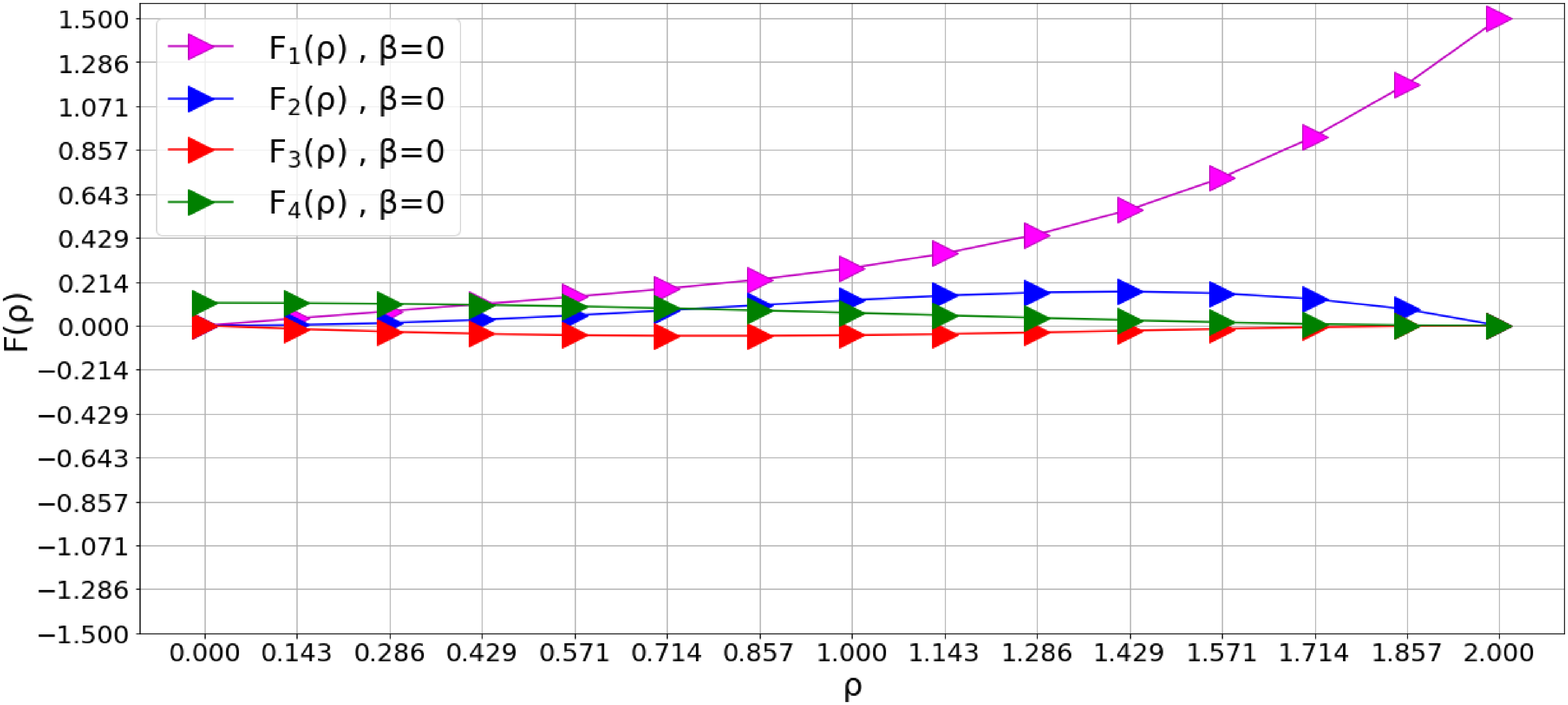}	
	\caption{$F_{1}, F_{2}, F_{3}$ and $F_{4}$ for $\beta=0$.}
	\end{subfigure}
	\begin{subfigure}[b]{0.45\linewidth}
	\includegraphics[width=\linewidth]{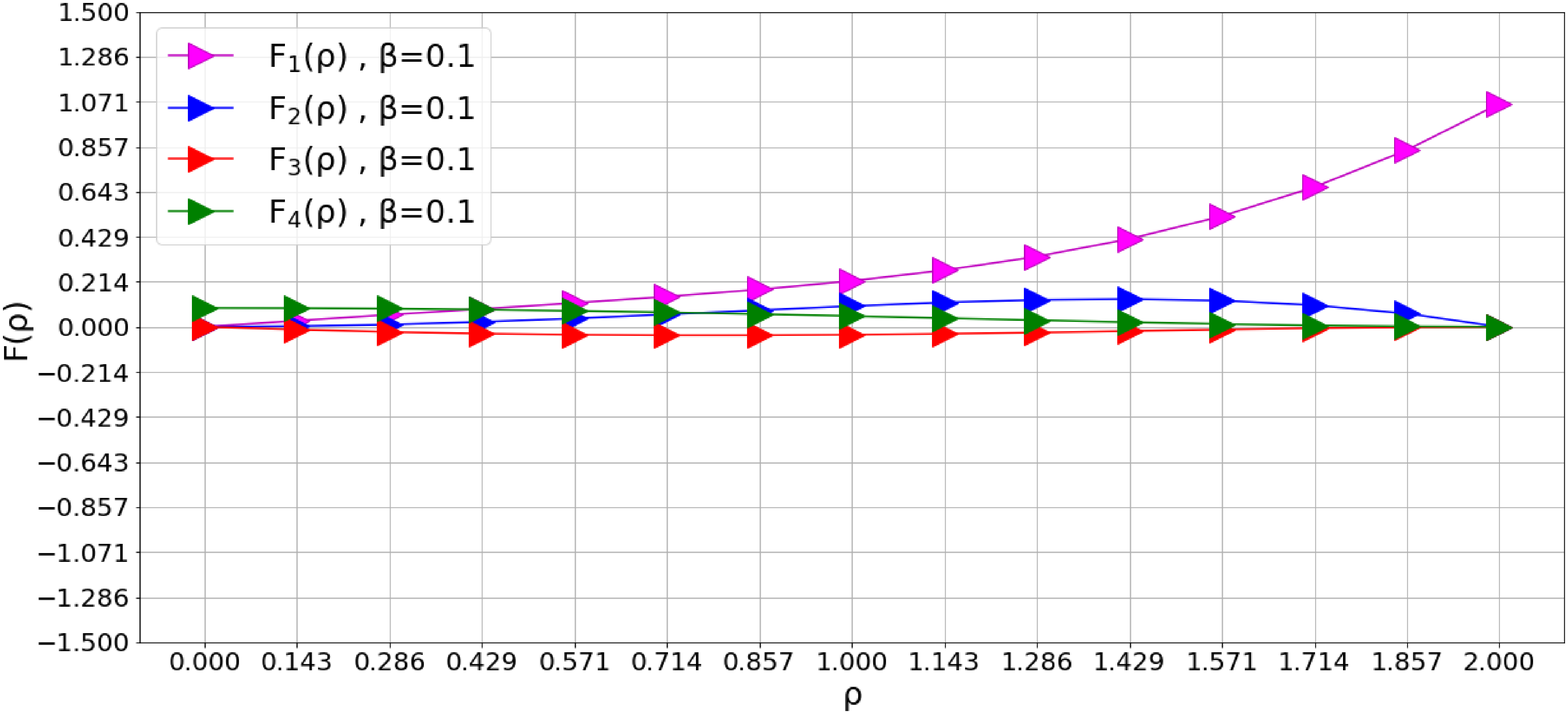}	
	\caption{$F_{1}, F_{2}, F_{3}$ and $F_{4}$ for $\beta=0.1$.}
	\end{subfigure}
    \begin{subfigure}[b]{0.45\linewidth}
	\includegraphics[width=\linewidth]{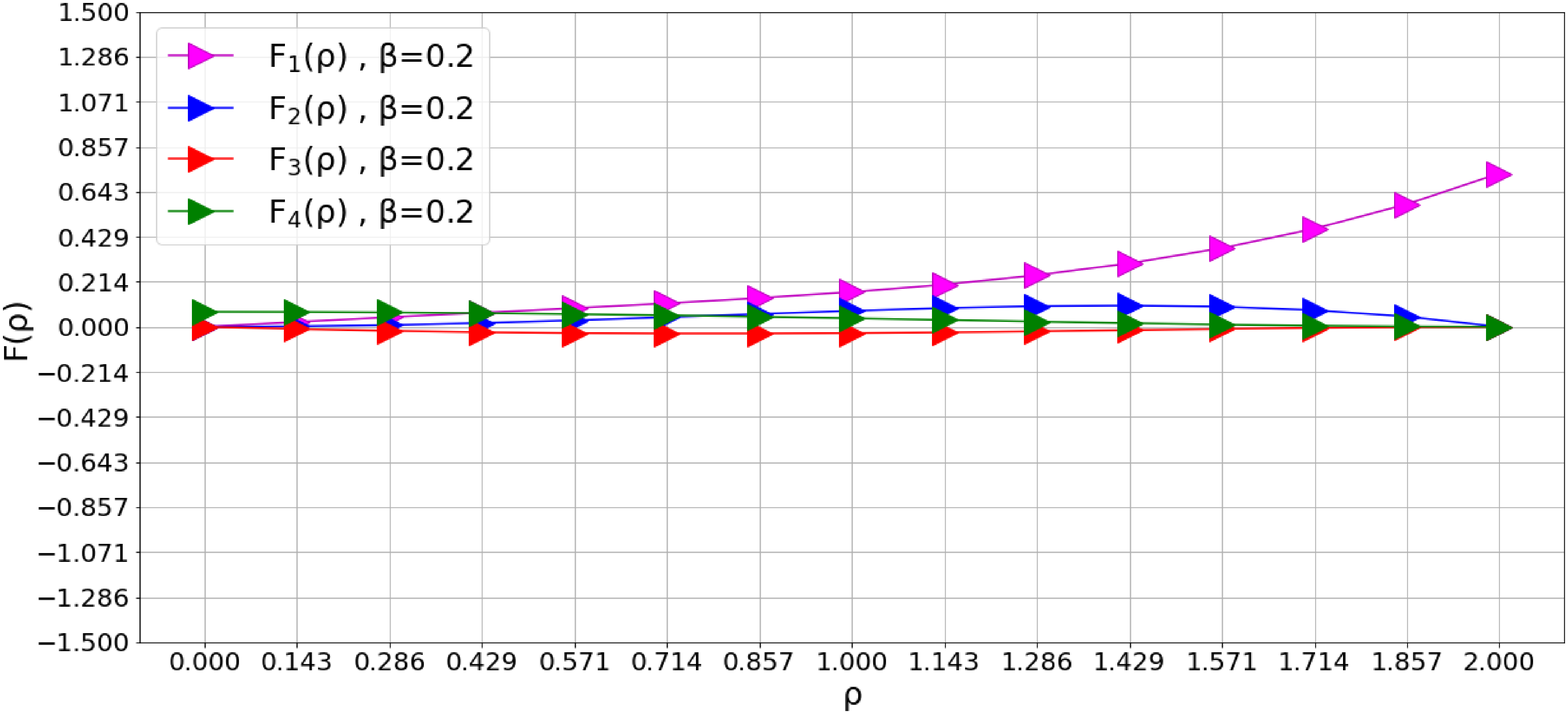}	
	\caption{$F_{1}, F_{2}, F_{3}$ and $F_{4}$ for $\beta=0.2$.}
    \end{subfigure}
    \begin{subfigure}[b]{0.45\linewidth}
    \includegraphics[width=\linewidth]{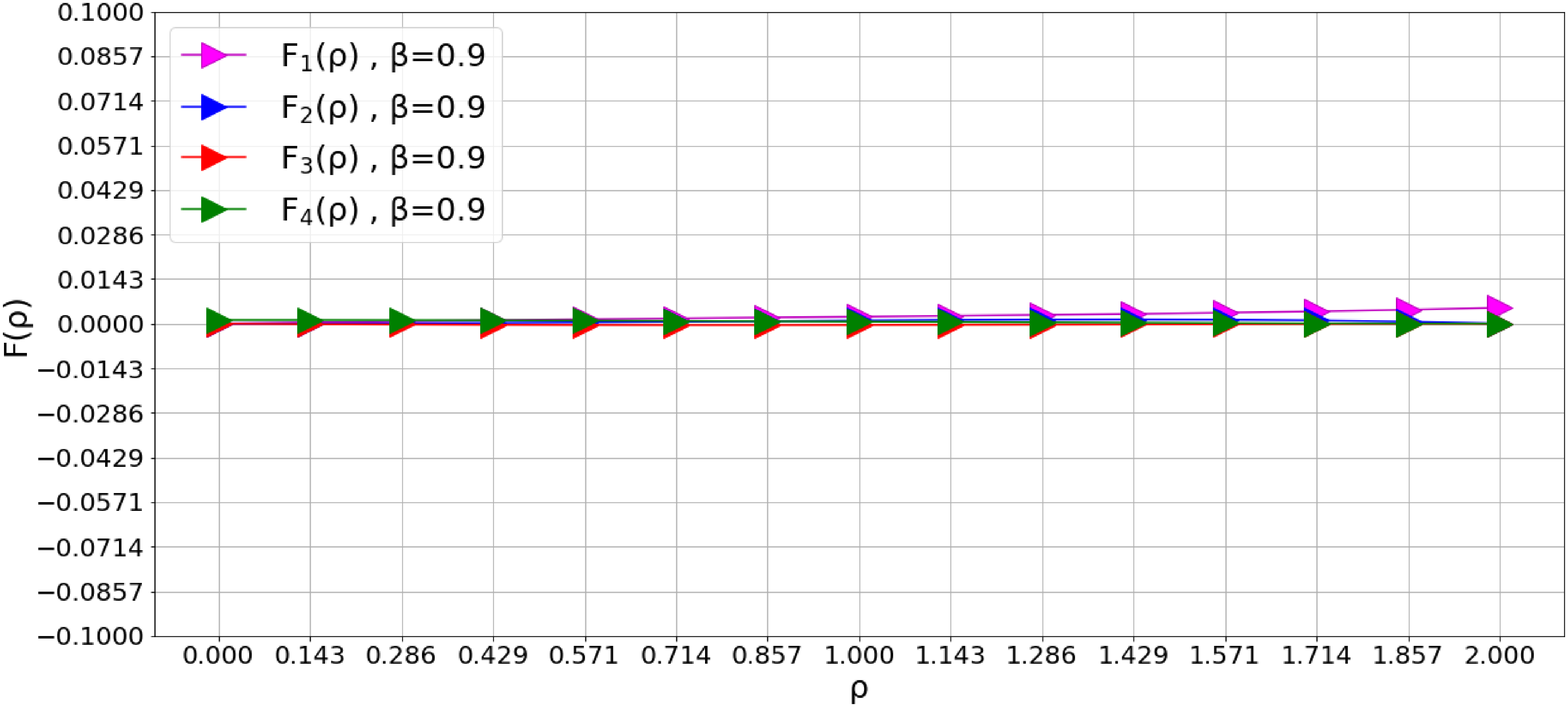}	
    \caption{$F_{1}, F_{2}, F_{3}$ and $F_{4}$ for $\beta=0.9$.}
    \end{subfigure}
\caption{Graph of $F_{1}, F_{2}, F_{3}$ and $F_{4}$  for $\gamma=0$ and $\lambda=m=1$.}
\end{figure}

Now, we need to maximize

$$F\left(\mu_{1}, \mu_{2}\right)=F_{1}+F_{2}\left(\mu_{1}+\mu_{2}\right)+F_{3}\left(\mu_{1}^{2}+\mu_{2}^{2}\right)+F_{4}\left(\mu_{1}+\mu_{2}\right)^{2}$$
in the closed square $\mathbb{S}=[0,1] \times[0,1]$ for $\rho \in[0,2]$. We investigate the maximum of $F\left(\mu_{1}, \mu_{2}\right)$ when $\rho \in(0,2), \rho=0$ and $\rho=2$, keeping in mind the sign of $$F_{\mu_{1} \mu_{1}} F_{\mu_{2} \mu_{2}}-\left(F_{\mu_{1} \mu_{2}}\right)^{2}$$ (according to the Second Derivative Test for functions of the two dependent variables $\mu_{1}$ and $\mu_{2}$)

First, let $\rho \in(0,2)$. Since $F_{3}<0$ and $F_{3}+2 F_{4}>0$ for $\rho \in(0,2)$, we see that 

$$
F_{\mu_{1} \mu_{1}} F_{\mu_{2} \mu_{2}}-\left(F_{\mu_{1} \mu_{2}}\right)^{2}= 4F_3\,(F_3+2 F_4)<0.
$$

\begin{figure}[H]
	\centering
	\includegraphics[width=0.8\textwidth]{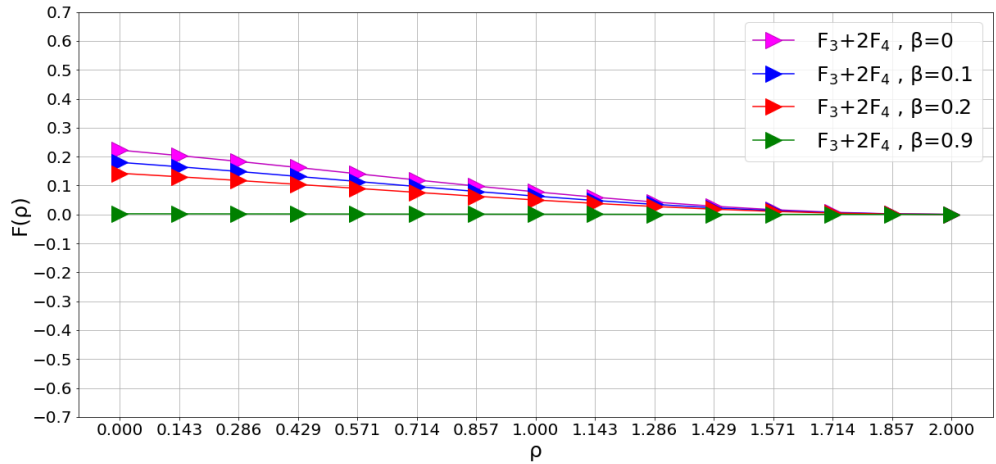}
	\caption{Graph of $F_{3}+2F_{4}$ for $\gamma=0$ and $\lambda=m=1$.}
\end{figure}

Thus, the function $F$ cannot have a local maximum in the interior of the square $\mathbb{S}$. Now, we investigate the maximum of $F$ on the boundary of the square $\mathbb{S}$.

Case 1. For $\mu_{1}=0$ and $\mu_{2} \in[0,1]$ (a similar argument can be applied for $\mu_{2}=0$ and $\mu_{1} \in[0,1]$, so we omit the details in that case), we obtain

$$
F\left(0, \mu_{2}\right)=G\left(\mu_{2}\right) \equiv F_{1}+F_{2} \mu_{2}+\left(F_{3}+F_{4}\right) \mu_{2}^{2}.
$$

Subcase 1. Let $F_{3}+F_{4} \geq 0$. In this case, for $0<\mu_{2}<1$ we have that

$$
G^{\prime}\left(\mu_{2}\right)=F_{2}+2\left(F_{3}+F_{4}\right) \mu_{2}>0,
$$
that is, $G\left(\mu_{2}\right)$ is an increasing function. Hence the maximum of $G\left(\mu_{2}\right)$ occurs at $\mu_{2}=1$ and
$$
\max \left\{F\left(0, \mu_{2}\right):\,\mu_2 \in [0,1] \right\}=\max \left\{G\left(\mu_{2}\right):\,\mu_2 \in [0,1] \right\}=G(1)=F_{1}+F_{2}+F_{3}+F_{4}.
$$

Subcase 2. Let $F_{3}+F_{4}<0$. Note
$$
F_{2}+2\left(F_{3}+F_{4}\right) \geq 0.$$
For $\mu_2 \in (0,1)$   since    $F_{3}+F_{4}<0$ we have   that

$$
F_{2}+2\left(F_{3}+F_{4}\right) \mu_{2} >F_{2}+2\left(F_{3}+F_{4}\right) \geq 0,
$$
so $G^{\prime}\left(\mu_{2}\right)>0$. Thus $\max \left\{G\left(\mu_{2}\right):\,\,\mu_2 \in [0,1]\right\}=G(1)$.

\begin{figure}[H]
	\centering
	\includegraphics[width=0.8\textwidth]{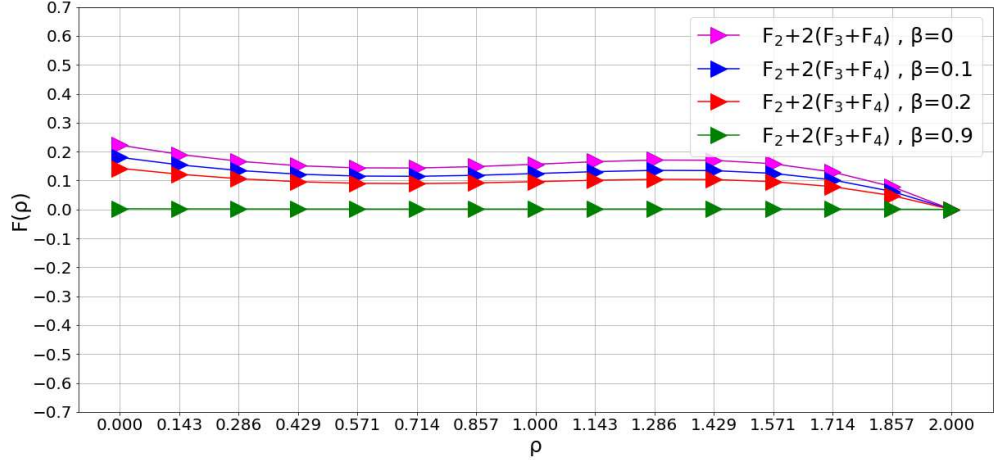}
	\caption{Graph of $F_{2}+2(F_{3}+F_{4})$ for $\gamma=0$ and $\lambda=m=1$.}
\end{figure}

Case 2. For $\mu_{1}=1$ and $\mu_{2}\in[0,1]$ (a similar argument can be applied for $\mu_{2}=1$ and $\mu_{1}\in[0,1]$, so we omit the details in that case), we obtain

$$
F\left(1, \mu_{2}\right)=H\left(\mu_{2}\right)=F_{1}+F_{2}+F_{3}+F_{4}+\left(F_{2}+2 F_{4}\right) \mu_{2}+\left(F_{2}+F_{4}\right) \mu_{2}^{2}.
$$

Thus an argument like in Subcases 1 and 2 yields

$$
\max \left\{F\left(1, \mu_{2}\right):\,\,\mu_2 \in [0,1] \right\}=\max \left\{H\left(\mu_{2}\right):\,\,\mu_2 \in [0,1]\right\}=H(1)=F_{1}+2\left(F_{2}+F_{3}\right)+4 F_{4}.
$$

\vspace{1mm}
Next let $\rho=2$.  Now let $(\mu_{1},\mu_{2})\in\mathbb{S}$ and note 

\begin{equation}
	F\left(\mu_{1}, \mu_{2}\right)=\frac{4(\gamma+2)(\gamma+3)(3 m \lambda+1)(m+1)^{2}(1-\beta)^{4}+24(\gamma+1)^{2}(m \lambda+1)^{3}(1-\beta)^{2}}{(\gamma+1)^{4}(\gamma+2)(\gamma+3)(m \lambda+1)^{4}(3 m \lambda+1)}.
\end{equation}

Keeping in mind the constant value in (2.25) we have

$$
\max \left\{F\left(\mu_{1}, \mu_{2}\right):\,\,\mu_1 \in [0,1], \,\mu_2 \in [0,1]\right\}=F(1,1)=F_{1}+2\left(F_{2}+F_{3}\right)+4 F_{4}.
$$

\vspace{1mm}
Finally, let $\rho=0$. Now let $(\mu_{1},\mu_{2})\in\mathbb{S}$ and note
$$F(\mu_{1},\mu_{2})=\frac{4(1-\beta)^{2}(\mu_{1}+\mu_{2})^2}{(\gamma+1)^{2}(\gamma+2)^{2}(2 m \lambda+1)^{2}}. $$

We see that, the maximum of $F\left(\mu_{1}, \mu_{2}\right)$ occurs at $\mu_{1}=\mu_{2}=1$ and
$$
\max \left\{F\left(\mu_{1}, \mu_{2}\right):\,\,\mu_1 \in [0,1], \,\mu_2 \in [0,1]\right\}=F(1,1)=F_{1}+2\left(F_{2}+F_{3}\right)+4 F_{4}.
$$

Combining all cases note since $F_{1}+2\left(F_{2}+F_{3}\right)+4 F_{4}\geq 0$ when $\rho \in [0,2]$ we have   
$$
\max \left\{F\left(\mu_{1}, \mu_{2}\right):\,\,\mu_1 \in [0,1], \,\mu_2 \in [0,1] \right\}=F(1,1).
$$

\begin{figure}[H]
	\centering
	\includegraphics[width=0.8\textwidth]{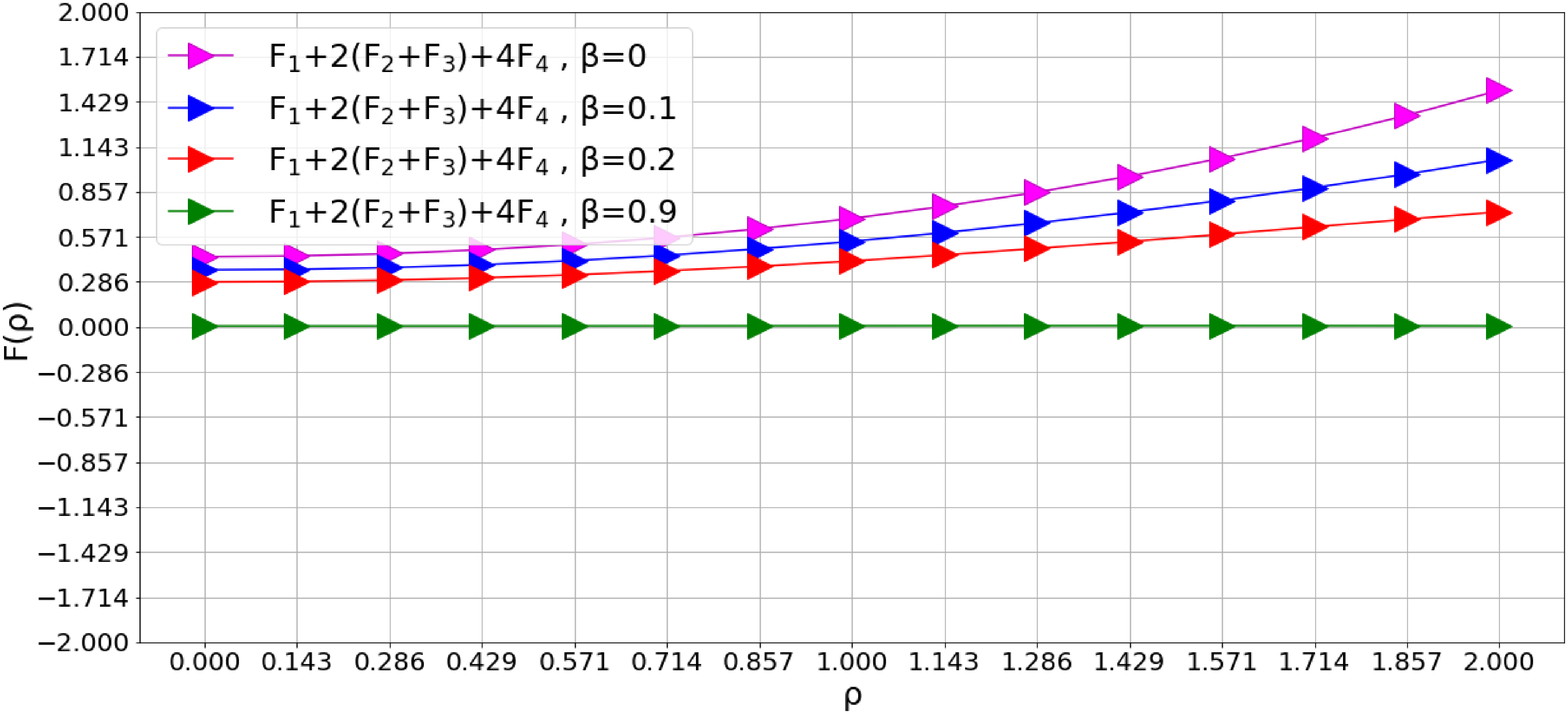}
	\caption{Graph of $F_{1}+2\left(F_{2}+F_{3}\right)+4 F_{4}$ for $\gamma=0$ and $\lambda=m=1$.}
\end{figure}

Let $K:[0,2] \rightarrow \mathbb{R}$ be given by

\begin{equation}
    K(\rho)=F(1,1)=F_{1}+2\left(F_{2}+F_{3}\right)+4 F_{4}.
\end{equation}

Substituting the values of $F_{1}, F_{2}, F_{3}$ and $F_{4}$ in the function $K$ defined by (2.26), yields

$$
\begin{aligned}
& K(\rho)=\frac{(1-\beta)^{2}}{4(\gamma+1)^{4}(\gamma+2)^{2}(\gamma+3)(1+m \lambda)^{4}(1+2 m \lambda)^{2}(1+3 m \lambda)}\times\\&\left[\left[(m+1)^{2}(\gamma+2)^{2}(\gamma+3)(2 m \lambda+1)^{2}(3 m \lambda+1)(1-\beta)^{2}\right.\right. \\&-2 m(\gamma+1)(\gamma+2)(\gamma+3)(m \lambda+1)^{2}(2 m \lambda+1)(3 m \lambda+1)(1-\beta) \\
&\left.-12(\gamma+1)^{2}(\gamma+2)(m \lambda+1)^{3}(2 m \lambda+1)^{2}+4(\gamma+1)^{2}(\gamma+3)(m \lambda+1)^{4}(3 m \lambda+1)\right] \rho^{4} \\
&+\left[8 m(\gamma+1)(\gamma+2)(\gamma+3)(1+m \lambda)^{2}(1+2 m \lambda)(1+3 m \lambda)(1-\beta)\right. \\
&\left.+72(\gamma+1)^{2}(\gamma+2)(m \lambda+1)^{3}(2 m \lambda+1)^{2}-32(\gamma+1)^{2}(\gamma+3)(m \lambda+1)^{4}(3 m \lambda+1)\right] \rho^{2} \\
&\left.+64(\gamma+1)^{2}(\gamma+3)(1+m \lambda)^{4}(1+3 m \lambda)\right].
\end{aligned}
$$
Now the maximum of $K(\rho)$ occurs either at $\rho=0$, $\rho \in (0,2)$ or $\rho=2$. Suppose first the maximum of $K(\rho)$ occurs at some $\rho \in (0,2)$. Note for any
$\rho \in(0,2)$  we have

$$
\begin{aligned}
& K^{\prime}(\rho)=\frac{(1-\beta)^{2}}{(\gamma+1)^{4}(\gamma+2)^{2}(\gamma+3)(1+m \lambda)^{4}(1+2 m \lambda)^{2}(1+3 m \lambda)}\times\\&\left[\left[(m+1)^{2}(\gamma+2)^{2}(\gamma+3)(2 m \lambda+1)^{2}(3 m \lambda+1)(1-\beta)^{2}\right.\right. \\&-2 m(\gamma+1)(\gamma+2)(\gamma+3)(m \lambda+1)^{2}(2 m \lambda+1)(3 m \lambda+1)(1-\beta) \\
&\left.-12(\gamma+1)^{2}(\gamma+2)(m \lambda+1)^{3}(2 m \lambda+1)^{2}+4(\gamma+1)^{2}(\gamma+3)(m \lambda+1)^{4}(3 m \lambda+1)\right] \rho^{3} \\
&+\left[4 m(\gamma+1)(\gamma+2)(\gamma+3)(\lambda m+1)^{2}(1+2 m \lambda)(1+3 m \lambda)(1-\beta)\right. \\
&\left.\left.+36(\gamma+1)^{2}(\gamma+2)(m \lambda+1)^{3}(2 m \lambda+1)^{2}-16(\gamma+1)^{2}(\gamma+3)(m \lambda+1)^{4}(3 m \lambda+1)\right] \rho\right].
\end{aligned}
$$

Next, we conclude the following results

Result 1. Let

$$
\omega_{1}(1-\beta)^{2}-2 \omega_{2}(1-\beta)-12 \omega_{3}+4 \omega_{4} \geq 0,
$$
that is,

$$
\beta \in\left[0,1-\frac{\omega_{2}+\sqrt{\omega_{2}^{2}+\omega_{1}\left[12 \omega_{3}-4 \omega_{4}\right]}}{\omega_{1}}\right],
$$
where $\omega_{1}, \omega_{2}, \omega_{3}$ and $\omega_{4}$ are given by (2.3), (2.4), (2.5) and (2.6), respectively.

Note, $K^{\prime}(\rho)>0$ for every $\rho \in(0,2)$. Thus,

\begin{equation*}
    \begin{aligned}
        \max \{K(\rho): 0< \rho < 2\}&=K(2^-)=\frac{4(1-\beta)^{2}}{(\gamma+1)^{2}(m \lambda+1)}\left[\frac{(m+1)^{2}(1-\beta)^{2}}{(\gamma+1)^{2}(m \lambda+1)^{3}}\right.\\&\qquad\qquad\qquad\qquad\qquad\qquad\left.+\frac{6}{(\gamma+2)(\gamma+3)(3 m \lambda+1)}\right].
    \end{aligned}
\end{equation*}

Result 2. Let

$$
\omega_{1}(1-\beta)^{2}-2 \omega_{2}(1-\beta)-12 \omega_{3}+4 \omega_{4}<0,
$$
that is,

$$
\beta \in\left(1-\frac{\omega_{2}+\sqrt{\omega_{2}^{2}+\omega_{1}\left[12 \omega_{3}-4 \omega_{4}\right]}}{\omega_{1}}, 1\right).
$$

Then $K^{\prime}(\rho)=0$ gives the  critical point $\rho_{1}=0$ or

$$
\rho_{2}=\sqrt{\frac{16 \omega_{4}-4 \omega_{2}(1-\beta)-36 \omega_{3}}{\omega_{1}(1-\beta)^{2}-2 \omega_{2}(1-\beta)-12 \omega_{3}+4 \omega_{4}}}.
$$

When

$$
\beta \in\left(1-\frac{\omega_{2}+\sqrt{\omega_{2}^{2}+\omega_{1}\left[12 \omega_{3}-4 \omega_{4}\right]}}{\omega_{1}}, 1-\frac{\omega_{2}+\sqrt{\omega_{2}^{2}+12 \omega_{1} \omega_{3}}}{2 \omega_{1}}\right],
$$
we observe that $\rho_{2} \geq 2$. Then  the maximum value of $K(\rho)$ occurs at $0^+$ or $2^-$.  This is a contradiction since we assumed the maximum of $K(\rho)$ occurs at some $\rho \in (0,2)$.

When

$$
\beta \in\left(1-\frac{\omega_{2}+\sqrt{\omega_{2}^{2}+12 \omega_{1} \omega_{3}}}{2 \omega_{1}}, 1\right),
$$
we observe that $\rho_{2} \in(0,2)$. Since $K^{\prime \prime}\left(\rho_{2}\right)<0$, the maximum value of $K(\rho)$ occurs at $\rho=\rho_{2}$. Thus, we have

\begin{equation}
	\begin{split}
 \max \{K(\rho): \rho \in (0,2)\}=K\left(\rho_{2}\right) &=\frac{4(1-\beta)^{2}}{(\gamma+1)^{2}(\gamma+2)^{2}(2 m \lambda+1)^{2}}\times\\
&\left[4-\frac{\left[\omega_{2}(1-\beta)+9 \omega_{3}-4 \omega_{4}\right]^{2}}{\omega_{4}\left[\omega_{1}(1-\beta)^{2}-2 \omega_{2}(1-\beta)-12 \omega_{3}+4 \omega_{4}\right]}\right].
\end{split}
\end{equation}

\vspace{1mm}
Next suppose  if $\beta \in [0, \tau]$ and the  maximum of $K(\rho)$ occurs at $\rho=2$. Then 

\begin{equation*}
    \begin{aligned}
        \max \{K(\rho): \rho \in [0, 2]\}&=K(2)=\frac{4(1-\beta)^{2}}{(\gamma+1)^{2}(m \lambda+1)}\left[\frac{(m+1)^{2}(1-\beta)^{2}}{(\gamma+1)^{2}(m \lambda+1)^{3}}\right.\\&\qquad\qquad\qquad\qquad\qquad\qquad\left.+\frac{6}{(\gamma+2)(\gamma+3)(3 m \lambda+1)}\right].
    \end{aligned}
\end{equation*}
We only now need to note (see the idea in the  second part of Result 2) if $\beta \in (\tau, 1)$ then the  maximum of $K(\rho)$ cannot occur at $\rho=2 $ since
$$
K(2) \leq \frac{4(1-\beta)^{2}}{(\gamma+1)^{2}(\gamma+2)^{2}(2 m \lambda+1)^{2}}\times
\left[4-\frac{\left[\omega_{2}(1-\beta)+9 \omega_{3}-4 \omega_{4}\right]^{2}}{\omega_{4}\left[\omega_{1}(1-\beta)^{2}-2 \omega_{2}(1-\beta)-12 \omega_{3}+4 \omega_{4}\right]}\right]  \,(=K(\rho_2)).
 $$

\vspace{1mm}
Finally let us consider $\beta \in [0,1)$ and  the  maximum of $K(\rho)$ occurring at $\rho=0$. Then 
$$
 \max \{K(\rho): \rho \in [0, 2]\}=K(0)=\frac{16(1-\beta)^{2}}{(\gamma+1)^{2}(\gamma+2)^{2}(2 m \lambda+1)^{2}}.
$$
We note  (see the ideas in the  second part of Result 2) if $\beta \in (\tau, 1)$ then the  maximum of $K(\rho)$ cannot occur at $\rho=0 $ since
$$
K(0) \leq \frac{4(1-\beta)^{2}}{(\gamma+1)^{2}(\gamma+2)^{2}(2 m \lambda+1)^{2}}\times
\left[4-\frac{\left[\omega_{2}(1-\beta)+9 \omega_{3}-4 \omega_{4}\right]^{2}}{\omega_{4}\left[\omega_{1}(1-\beta)^{2}-2 \omega_{2}(1-\beta)-12 \omega_{3}+4 \omega_{4}\right]}\right]  \,(=K(\rho_2)).
$$
Finally note (see the ideas in Result 1 and the details in Result 2) if
$$
\beta \in\left[0,1-\frac{\omega_{2}+\sqrt{\omega_{2}^{2}+\omega_{1}\left[12 \omega_{3}-4 \omega_{4}\right]}}{\omega_{1}}\right],
$$
or

$$
\beta \in\left(1-\frac{\omega_{2}+\sqrt{\omega_{2}^{2}+\omega_{1}\left[12 \omega_{3}-4 \omega_{4}\right]}}{\omega_{1}}, 1\right),
$$
then the  maximum of $K(\rho)$ cannot occur at $\rho=0$ since $K(0) \leq K(2)$.

This completes the proof.
\end{proof}

By setting $\lambda=1$ and $\gamma=0$ in Theorem 2.1, we obtain the following consequence.

\begin{Corollary}\cite{A24}
 Let $f \in \Xi_{\Sigma_{\mathrm{m}}}(\beta)\quad(0 \leq \beta<1)$ be given by (1.5). Then

$$
\left|a_{m+1} a_{3 m+1}-a_{2 m+1}^{2}\right| \leq\left\{\begin{array}{cc}
\frac{4(1-\beta)^{2}}{m+1}\left[\frac{(1-\beta)^{2}}{m+1}+\frac{1}{3 m+1}\right], \qquad\qquad\qquad\quad\qquad\qquad \beta \in[0, v] \\
\\
\frac{(1-\beta)^{2}}{(2 m+1)^{2}}\left[4-\frac{\left[m(1-\beta) \psi_{1}+3 \psi_{2}-2 \psi_{3}\right]^{2}}{\psi_{3}\left[(2 m+1)(1-\beta)^{2} \psi_{1}-m(1-\beta) \psi_{1}+\psi_{3}-2 \psi_{2}\right]}\right], \beta \in[v, 1)
\end{array}\right.
$$
where

$$
\begin{aligned}
& \psi_{1}:=(2 m+1)(3 m+1), \\
& \psi_{2}:=(m+1)(2 m+1)^{2}, \\
& \psi_{3}:=(m+1)^{2}(3 m+1),
\end{aligned}
$$
and

$$
v:=\frac{(3 m+1)(7 m+4)-\sqrt{m^{2}(3 m+1)^{2}+8 \psi_{2}(3 m+1)}}{4 \psi_{1}}.
$$
\end{Corollary}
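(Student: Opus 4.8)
The plan is to obtain the Corollary purely as a specialization of Theorem 2.1, since the class $\Xi_{\Sigma_m}(\beta)$ is nothing other than $\Xi_{\Sigma_m}(1,0;\beta)$. First I would record that the choice $\gamma=0$ collapses the $m$-fold Ruscheweyh derivative to the identity (each coefficient weight $\Gamma(k+1)/[\Gamma(k+1)\Gamma(1)]$ equals $1$), while $\lambda=1$ turns the defining inequalities (2.1)--(2.2) into $\operatorname{Re} f'(z)>\beta$ and $\operatorname{Re} g'(w)>\beta$. This identifies the subclass with the one treated in \cite{A24}, so the stated bound must follow from Theorem 2.1 by setting $\lambda=1$, $\gamma=0$ throughout.

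Next I would substitute $\lambda=1$, $\gamma=0$ into (2.3)--(2.6) to get the explicit values $\omega_1=12(m+1)^2(2m+1)^2(3m+1)$, $\omega_2=6m(m+1)^2(2m+1)(3m+1)$, $\omega_3=2(m+1)^3(2m+1)^2$, and $\omega_4=3(m+1)^4(3m+1)$. The key observation is that each $\omega_i$ carries a common factor $6(m+1)^2$ that I would pull out in order to rewrite the quantities appearing in the bound in terms of $\psi_1=(2m+1)(3m+1)$, $\psi_2=(m+1)(2m+1)^2$, and $\psi_3=(m+1)^2(3m+1)$; concretely $\omega_2(1-\beta)+9\omega_3-4\omega_4=6(m+1)^2[m(1-\beta)\psi_1+3\psi_2-2\psi_3]$ and $\omega_4=3(m+1)^2\psi_3$, together with the analogous identity $\omega_1(1-\beta)^2-2\omega_2(1-\beta)-12\omega_3+4\omega_4=12(m+1)^2[(2m+1)(1-\beta)^2\psi_1-m(1-\beta)\psi_1+\psi_3-2\psi_2]$.

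For the first branch the reduction is immediate: the prefactor $4(1-\beta)^2/[(\gamma+1)^2(m\lambda+1)]$ becomes $4(1-\beta)^2/(m+1)$ and the inner bracket collapses to $(1-\beta)^2/(m+1)+1/(3m+1)$. For the second branch I would substitute the factored forms above into the expression of Theorem 2.1; the squared numerator contributes $36(m+1)^4$, the denominator contributes $3(m+1)^2\psi_3\cdot 12(m+1)^2=36(m+1)^4\psi_3$, and after cancelling $36(m+1)^4$ the fraction becomes exactly $[m(1-\beta)\psi_1+3\psi_2-2\psi_3]^2\big/\{\psi_3[(2m+1)(1-\beta)^2\psi_1-m(1-\beta)\psi_1+\psi_3-2\psi_2]\}$, while the prefactor $4(1-\beta)^2/[(\gamma+1)^2(\gamma+2)^2(2m\lambda+1)^2]$ becomes $(1-\beta)^2/(2m+1)^2$, matching the Corollary.

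The last step is to check that the threshold $\tau$ of Theorem 2.1 equals $v$. Here I would compute $\omega_2^2+12\omega_1\omega_3$, factor out $36(m+1)^4(2m+1)^2(3m+1)$ to obtain $\sqrt{\omega_2^2+12\omega_1\omega_3}=6(m+1)^2(2m+1)\sqrt{m^2(3m+1)^2+8\psi_2(3m+1)}$, and then simplify $1-\tau=[\omega_2+\sqrt{\omega_2^2+12\omega_1\omega_3}]/(2\omega_1)$; using the identity $4\psi_1-m(3m+1)=(3m+1)(7m+4)$ one finds $\tau=[(3m+1)(7m+4)-\sqrt{m^2(3m+1)^2+8\psi_2(3m+1)}]/(4\psi_1)=v$. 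The only real obstacle is bookkeeping, namely keeping track of the common factors ($6$, $12$, and powers of $(m+1)$) so that numerator and denominator cancel cleanly in the second branch and in the threshold; no idea beyond Theorem 2.1 is required.
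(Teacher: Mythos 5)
Your proposal is correct and follows exactly the paper's route: the paper obtains this corollary simply by setting $\lambda=1$ and $\gamma=0$ in Theorem 2.1, and your substitutions into (2.3)--(2.6), the factorizations in terms of $\psi_1,\psi_2,\psi_3$, and the verification that $\tau$ reduces to $v$ are all accurate. The only nitpick is the phrasing that ``each $\omega_i$ carries a common factor $6(m+1)^2$'' (false for $\omega_3$ and $\omega_4$ individually), but the combinations you actually use, such as $9\omega_3-4\omega_4$ and $\omega_2(1-\beta)+9\omega_3-4\omega_4$, do factor as you state, so the argument stands.
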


By taking $m=1$ in Theorem 2.1, we conclude the following result.

\begin{Corollary}
    Let $f \in \Xi_{\Sigma}(\lambda, \gamma ; \beta)\quad\left(\lambda \geq 1, \gamma \in \mathbb{N}_{0}, 0 \leq \beta<1\right)$ be given by (1.1). Then

$$
\left|a_{2} a_{4}-a_{3}^{2}\right| \leq\left\{\begin{array}{c}
\frac{8(1-\beta)^{2}}{(\gamma+1)^{2}(\lambda+1)}\left[\frac{2(1-\beta)^{2}}{(\gamma+1)^{2}(\lambda+1)^{3}}+\frac{3}{(\gamma+2)(\gamma+3)(3 \lambda+1)}\right], \qquad\qquad\quad \beta \in[0, \xi] \\
\\
\frac{4(1-\beta)^{2}}{(\gamma+1)^{2}(\gamma+2)^{2}(2 \lambda+1)^{2}}\left[4-\frac{\left[\vartheta_{2}(1-\beta)+9 \vartheta_{3}-4 \vartheta_{4}\right]^{2}}{\vartheta_{4}\left[4 \vartheta_{1}(1-\beta)^{2}-2 \vartheta_{2}(1-\beta)-12 \vartheta_{3}+4 \vartheta_{4}\right]}\right], \beta \in[\xi, 1)
\end{array}\right.
$$
where

\begin{eqnarray*}
&&\vartheta_{1}:=(\gamma+2)^{2}(\gamma+3)(2 \lambda+1)^{2}(3 \lambda+1), \\&&
\vartheta_{2}:=(\gamma+1)(\gamma+2)(\gamma+3)(\lambda+1)^{2}(2 \lambda+1)(3 \lambda+1), \\&&
\vartheta_{3}:=(\gamma+1)^{2}(\gamma+2)(\lambda+1)^{3}(2 \lambda+1)^{2}, \\&&
\vartheta_{4}:=(\gamma+1)^{2}(\gamma+3)(\lambda+1)^{4}(3 \lambda+1),
\end{eqnarray*}
and
 
$
\quad\qquad\qquad\quad\qquad\qquad\quad\xi:=1-\frac{\vartheta_2+\sqrt{\vartheta_2^2+48 \vartheta_1 \vartheta_3}}{8 \vartheta_1}.
$
\end{Corollary}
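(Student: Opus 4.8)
The plan is to obtain Corollary 2.3 as the specialization $m=1$ of Theorem 2.1. For $m=1$ the class $\Xi_{\Sigma_1}(\lambda,\gamma;\beta)$ coincides with $\Xi_{\Sigma}(\lambda,\gamma;\beta)$, and the normalized series (1.5) and its inverse (1.7) collapse to (1.1) and (1.3), so that $a_{m+1},a_{2m+1},a_{3m+1}$ become $a_2,a_3,a_4$. Consequently $|a_{m+1}a_{3m+1}-a_{2m+1}^2|=|a_2a_4-a_3^2|=|H_2(2)|$, and it suffices to substitute $m=1$ into the two-branch bound of Theorem 2.1 and simplify; no new analytic input is required.

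First I would record how the auxiliary constants transform. Setting $m=1$ in (2.3)--(2.6), the factor $(m+1)^2=4$ detaches from $\omega_1$, giving $\omega_1=4\vartheta_1$, while the other three reduce verbatim, $\omega_2=\vartheta_2$, $\omega_3=\vartheta_3$, $\omega_4=\vartheta_4$, with $\vartheta_1,\dots,\vartheta_4$ as defined in the corollary. The only bookkeeping subtlety is this isolated factor of four: it must be propagated consistently through both the threshold and the second branch.

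Second I would verify the cutoff. Substituting $\omega_1=4\vartheta_1,\ \omega_2=\vartheta_2,\ \omega_3=\vartheta_3$ into $\tau=1-\frac{\omega_2+\sqrt{\omega_2^2+12\omega_1\omega_3}}{2\omega_1}$ yields $\tau=1-\frac{\vartheta_2+\sqrt{\vartheta_2^2+48\vartheta_1\vartheta_3}}{8\vartheta_1}=\xi$, so both pieces of the corollary are indexed by exactly the threshold stated there.

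Finally I would simplify each branch. For $\beta\in[0,\xi]$, putting $m=1$ in $\frac{4(1-\beta)^2}{(\gamma+1)^2(m\lambda+1)}\bigl[\frac{(m+1)^2(1-\beta)^2}{(\gamma+1)^2(m\lambda+1)^3}+\frac{6}{(\gamma+2)(\gamma+3)(3m\lambda+1)}\bigr]$ and extracting a common factor $2$ recovers $\frac{8(1-\beta)^2}{(\gamma+1)^2(\lambda+1)}\bigl[\frac{2(1-\beta)^2}{(\gamma+1)^2(\lambda+1)^3}+\frac{3}{(\gamma+2)(\gamma+3)(3\lambda+1)}\bigr]$. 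For $\beta\in[\xi,1)$, inserting $\omega_1=4\vartheta_1$ together with the matching identities into the bracketed fraction reproduces the stated expression, with $4\vartheta_1(1-\beta)^2$ appearing in the inner denominator and $\vartheta_4$ outside it. The main, and indeed only, obstacle is purely algebraic: keeping $\omega_1=4\vartheta_1$ correctly aligned throughout, so that the first branch halves cleanly and the second branch's denominator carries the coefficient $4\vartheta_1$ while every remaining constant matches verbatim.
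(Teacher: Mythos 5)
Your proposal is correct and is exactly the paper's route: the paper derives this corollary simply by setting $m=1$ in Theorem 2.1, and your bookkeeping ($\omega_1=4\vartheta_1$, $\omega_i=\vartheta_i$ for $i=2,3,4$, $\tau\mapsto\xi$, and the factor-of-two extraction in the first branch) is precisely the algebra that substitution entails. No discrepancy to report.
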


\begin{Remark}
 Corollary 2.2 improves a result in Altinkaya and Yalçin \cite[Theorem 3]{A26}.
\end{Remark}

By putting $\gamma=0$ in Corollary 2.2, we obtain the following result.

\begin{Corollary}
    Let $f \in \Xi_{\Sigma}(\lambda ; \beta)\quad(\lambda \geq 1,0 \leq \beta<1)$ be given by (1.1). Then

$$
\left|a_2 a_4-a_3^2\right| \leq\left\{\begin{array}{cc}
\frac{8(1-\beta)^2}{\lambda+1}\left[\frac{2(1-\beta)^2}{(\lambda+1)^3}+\frac{1}{2(3 \lambda+1)}\right], \qquad\qquad \qquad\beta \in[0, \epsilon] \\
\\
\frac{2(1-\beta)^2}{(2 \lambda+1)^2}\left[4-\frac{\left[\eta_2(1-\beta)+3 \eta_3-2 \eta_4\right]^2}{\eta_4\left[8 \eta_1(1-\beta)^2-2 \eta_2(1-\beta)-4 \eta_3+2 \eta_4\right]}\right], \beta \in[\epsilon, 1)
\end{array}\right.
$$    
where

\begin{eqnarray*}
&&\eta_1:=(2 \lambda+1)^2(3 \lambda+1), \\&&
\eta_2:=(\lambda+1)^2(2 \lambda+1)(3 \lambda+1), \\&&
\eta_3:=(\lambda+1)^3(2 \lambda+1)^2, \\&&
\eta_4:=(\lambda+1)^4(3 \lambda+1),
\end{eqnarray*}
and

$$
\epsilon:=1-\frac{(\lambda+1)^2(3 \lambda+1)+\sqrt{(\lambda+1)^4(3 \lambda+1)^2+32(\lambda+1)^3(2 \lambda+1)^2(3 \lambda+1)}}{16(2 \lambda+1)(3 \lambda+1)}.
$$
\end{Corollary}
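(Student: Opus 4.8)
The plan is to follow the standard Carathéodory-function route for second Hankel determinant estimates. First I would invoke the defining inequalities (2.1) and (2.2): since the two real parts exceed $\beta$, there exist $p, q \in \mathcal{P}$, both of the $m$-fold symmetric form (1.6), with
\begin{equation*}
(1-\lambda)\frac{\mathcal{R}^{\gamma} f(z)}{z} + \lambda\left(\mathcal{R}^{\gamma} f(z)\right)' = \beta + (1-\beta)p(z),
\end{equation*}
and the analogous identity for $g = f^{-1}$ in terms of $q$. Expanding $\mathcal{R}^{\gamma} f$ via the $m$-fold Ruscheweyh series and collecting the $z^{m}$, $z^{2m}$, $z^{3m}$ terms produces the two coefficient expansions; matching these against $(1-\beta)p_{m}$, $(1-\beta)p_{2m}$, $(1-\beta)p_{3m}$ and the corresponding $q$-coefficients for $g$ yields six scalar identities relating $a_{m+1}, a_{2m+1}, a_{3m+1}$ to the Carathéodory data.

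From the $z^{m}$ relations I would extract $p_{m} = -q_{m}$ and solve for $a_{m+1}$; the $z^{2m}$ relations then give $a_{2m+1}$ as a combination of $p_{m}^{2}$ and $p_{2m}-q_{2m}$; and the $z^{3m}$ relations give $a_{3m+1}$ in terms of $p_{m}(p_{2m}-q_{2m})$ and $p_{3m}-q_{3m}$. Substituting these three expressions into $a_{m+1}a_{3m+1}-a_{2m+1}^{2}$ collapses the determinant into a single polynomial in $p_{m}$, $p_{2m}-q_{2m}$ and $p_{3m}-q_{3m}$. Next I would deploy Lemma 1.2 to parametrize $p_{2m}-q_{2m}$ and $p_{3m}-q_{3m}$ through $\rho := p_{m} \in [0,2]$ together with two disc parameters $x,y$, reducing the entire quantity to an expression in $\rho$, $x$, and $y$.

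Applying the triangle inequality and writing $\mu_{1} = |x|$, $\mu_{2} = |y|$ then bounds the modulus by
\begin{equation*}
F(\mu_{1},\mu_{2}) = F_{1} + F_{2}(\mu_{1}+\mu_{2}) + F_{3}(\mu_{1}^{2}+\mu_{2}^{2}) + F_{4}(\mu_{1}+\mu_{2})^{2},
\end{equation*}
with explicit $\rho$-dependent coefficients satisfying $F_{1},F_{2},F_{4} \geq 0$ and $F_{3} \leq 0$. Maximizing over the square $[0,1]\times[0,1]$ is a two-variable optimization handled by the second-derivative test: since $F_{3} < 0$ while $F_{3} + 2F_{4} > 0$ on $(0,2)$, the Hessian determinant $4F_{3}(F_{3}+2F_{4})$ is negative, so no interior maximum exists, and a short boundary analysis (monotonicity of the one-variable restrictions on each edge) forces the maximum to occur at $(1,1)$. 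This leaves the single-variable problem of maximizing $K(\rho) := F(1,1)$ over $\rho \in [0,2]$.

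The hard part will be this final optimization of $K(\rho)$ and its packaging into the two stated closed forms. After substituting the $F_{i}$, $K$ is a quartic in $\rho$ containing only even powers, so $K'(\rho)$ has $\rho = 0$ as one root and a second root $\rho_{2}$ whose admissibility is governed by the sign of the leading quartic coefficient, which is exactly $\omega_{1}(1-\beta)^{2} - 2\omega_{2}(1-\beta) - 12\omega_{3} + 4\omega_{4}$. When this quantity is nonnegative (equivalently $\beta \leq \tau$) one checks $K'(\rho) > 0$ throughout $(0,2)$, so the maximum is $K(2)$, giving the first branch. When it is negative ($\beta \geq \tau$) the critical point $\rho_{2}$ enters $(0,2)$ precisely as $\beta$ crosses $\tau$; verifying $K''(\rho_{2}) < 0$ identifies $\rho_{2}$ as the maximizer and, after simplification, produces the second branch. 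The genuine labor is algebraic: expressing the threshold as $\tau = 1 - \frac{\omega_{2} + \sqrt{\omega_{2}^{2} + 12\omega_{1}\omega_{3}}}{2\omega_{1}}$, confirming that $\rho_{2} \in (0,2)$ holds exactly on $(\tau,1)$, reducing $K(\rho_{2})$ to the compact form involving $\omega_{1},\dots,\omega_{4}$, and checking that the competing endpoint values $K(0)$ and $K(2)$ never exceed it on that range.
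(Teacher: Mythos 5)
Your plan is, in substance, a re-derivation of Theorem 2.1, and it mirrors the paper's argument for that theorem step for step: Carath\'eodory representation, coefficient matching, Lemma 1.2, the Hessian sign $4F_{3}(F_{3}+2F_{4})<0$, reduction to $K(\rho)=F(1,1)$, and the two-branch analysis governed by the sign of $\omega_{1}(1-\beta)^{2}-2\omega_{2}(1-\beta)-12\omega_{3}+4\omega_{4}$. But that is not how the paper proves this particular corollary: here the proof is a one-line specialization, setting $\gamma=0$ in Corollary 2.2 (which is itself $m=1$ in Theorem 2.1), so all the analytic work is done once at the general level and never repeated. Your route buys self-containedness at the cost of proving a far more general statement and then stopping one step short: your conclusion is phrased in the general quantities $\omega_{1},\dots,\omega_{4}$ and $\tau$, and you never perform the substitution $m=1$, $\gamma=0$ that converts these into the stated $\eta_{1},\dots,\eta_{4}$ and $\epsilon$. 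For this corollary that substitution \emph{is} the entire content of the proof, so it cannot be omitted; concretely one needs $\omega_{1}=48\eta_{1}$, $\omega_{2}=6\eta_{2}$, $\omega_{3}=2\eta_{3}$, $\omega_{4}=3\eta_{4}$, from which $\tau$ does reduce to $\epsilon$ and the first displayed branch does come out as stated.

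Had you carried out that final step, you would also have hit a genuine obstruction: the second branch your argument produces is
\[
\frac{(1-\beta)^{2}}{(2\lambda+1)^{2}}\left[4-\frac{2\left[\eta_{2}(1-\beta)+3\eta_{3}-2\eta_{4}\right]^{2}}{\eta_{4}\left[8\eta_{1}(1-\beta)^{2}-2\eta_{2}(1-\beta)-4\eta_{3}+2\eta_{4}\right]}\right],
\]
which is \emph{not} the printed formula
$\frac{2(1-\beta)^{2}}{(2\lambda+1)^{2}}\left[4-\frac{[\,\cdots\,]^{2}}{\eta_{4}[\,\cdots\,]}\right]$;
the two differ by the additive constant $\frac{4(1-\beta)^{2}}{(2\lambda+1)^{2}}$. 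The derived form is the correct one: setting $\lambda=1$ in it recovers exactly Corollary 2.4, namely $\frac{(1-\beta)^{2}}{16}\cdot\frac{60\beta^{2}-84\beta-25}{9\beta^{2}-15\beta+1}$ with threshold $\frac{11-\sqrt{37}}{12}$, whereas setting $\lambda=1$ in the corollary as printed does not. So the statement carries a misprint (it remains a true but strictly weaker bound, being the sharp value plus a positive constant), and your proof, once completed, establishes the sharper corrected form rather than the displayed one; a careful write-up should say this explicitly instead of claiming to arrive at the formula exactly as stated.
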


\begin{Remark}
    Corollary 2.3 improves a result in Altinkaya and Yalçin \cite[Corollary 5]{A26}.
\end{Remark}

By setting $\lambda=1$ in Corollary 2.3 , we get the following consequence.

\begin{Corollary}\cite{A27}
    Let $f \in \Xi_{\Sigma}(\beta)\quad(0 \leq \beta<1)$ be given by (1.1). Then

$$
\left|a_2 a_4-a_3^2\right| \leq\left\{\begin{array}{ll}
(1-\beta)^2\left[(1-\beta)^2+\frac{1}{2}\right], & \beta \in\left[0, \frac{11-\sqrt{37}}{12}\right] \\
\\
\frac{(1-\beta)^2}{16}\left[\frac{60 \beta^2-84 \beta-25}{9 \beta^2-15 \beta+1}\right], & \beta \in\left[\frac{11-\sqrt{37}}{12}, 1\right)
\end{array} .\right.
$$
\end{Corollary}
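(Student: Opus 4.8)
The plan is to run the standard coefficient-functional machinery for bi-univalent classes, reducing everything to the coefficients of two Carathéodory functions and then performing a two-stage optimization. First I would translate the defining inequalities (2.1)--(2.2): since the real parts exceed $\beta$, there exist $p,q \in \mathcal{P}$ with the $m$-fold expansion (1.6) such that the two operator expressions equal $\beta + (1-\beta)p(z)$ and $\beta + (1-\beta)q(w)$. Next I would expand $(1-\lambda)\mathcal{R}^{\gamma} f(z)/z + \lambda(\mathcal{R}^{\gamma} f(z))'$ using the series (1.5) for $f$ and the corresponding expansion for $g = f^{-1}$ from (1.7), collecting the coefficients of $z^{m}, z^{2m}, z^{3m}$. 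Equating these with $(1-\beta)p_{m}, (1-\beta)p_{2m}, (1-\beta)p_{3m}$ (and likewise for $q$) produces six scalar relations; from the $z^{m}$ pair I get $p_{m} = -q_{m}$ and a formula for $a_{m+1}$, which I then feed forward through the $z^{2m}$ and $z^{3m}$ relations to solve for $a_{2m+1}$ and $a_{3m+1}$ in terms of $p_{m}$, $p_{2m}-q_{2m}$, and $p_{3m}-q_{3m}$.

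Then I would assemble $a_{m+1}a_{3m+1} - a_{2m+1}^{2}$ as a polynomial in these quantities. The decisive reduction is Lemma 1.2: I would rewrite $p_{2m}-q_{2m}$ and $p_{3m}-q_{3m}$ through the representation formulas, introducing auxiliary parameters $x,y,z,w$ of modulus at most one, and then apply the triangle inequality together with $|p_{m}| \le 2$ from Lemma 1.1. Setting $\rho = p_{m} \in [0,2]$, $\mu_{1} = |x|$, $\mu_{2} = |y|$, the estimate collapses to $F_{1} + F_{2}(\mu_{1}+\mu_{2}) + F_{3}(\mu_{1}^{2}+\mu_{2}^{2}) + F_{4}(\mu_{1}+\mu_{2})^{2}$ with explicit coefficients $F_{i} = F_{i}(\rho)$ whose signs ($F_{1},F_{2},F_{4} \ge 0$ and $F_{3} \le 0$) I would record for later use.

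The first optimization is over the square $\mathbb{S} = [0,1]^{2}$ for fixed $\rho$. For $\rho \in (0,2)$ the Hessian determinant of $F$ equals $4F_{3}(F_{3}+2F_{4})$, which is negative once I verify $F_{3}+2F_{4} > 0$, so no interior maximum can occur; a routine boundary analysis, exploiting monotonicity of the one-variable quadratics obtained along each edge, pins the maximum at the corner $(1,1)$, and the degenerate endpoints $\rho = 0$ and $\rho = 2$ give the same corner value. This leaves the single-variable function $K(\rho) = F_{1} + 2(F_{2}+F_{3}) + 4F_{4}$ to be maximized on $[0,2]$.

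The main obstacle is this final maximization. After substituting the $F_{i}$, the expression $K(\rho)$ is a biquadratic in $\rho$, so $K'(\rho)$ factors as $\rho$ times a piece affine in $\rho^{2}$ whose leading coefficient is exactly $\omega_{1}(1-\beta)^{2} - 2\omega_{2}(1-\beta) - 12\omega_{3} + 4\omega_{4}$. The sign of this coefficient, governed by a quadratic inequality in $1-\beta$, is what splits the analysis: when it is nonnegative, $K$ is increasing on $(0,2)$ and the maximum is $K(2)$, yielding the first branch of the theorem; when it is negative there is an interior critical point $\rho_{2}$, and I must confirm via the threshold $\tau$ that $\rho_{2} \in (0,2)$ and $K''(\rho_{2}) < 0$, giving the second branch $K(\rho_{2})$. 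The delicate bookkeeping lies in extracting the precise value of $\tau$ from the equation $\rho_{2} = 2$ and in checking that each branch formula dominates the competing endpoint values $K(0)$ and $K(2)$ on its own $\beta$-range, which is exactly what makes the stated piecewise bound the correct maximum.
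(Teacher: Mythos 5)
Your proposal is correct and is essentially the paper's own route: what you describe is precisely the proof of Theorem 2.1 (Carath\'eodory representation via Lemmas 1.1--1.2, the triangle-inequality reduction to $F_1+F_2(\mu_1+\mu_2)+F_3(\mu_1^2+\mu_2^2)+F_4(\mu_1+\mu_2)^2$, the boundary analysis on $[0,1]^2$, and the final maximization of $K(\rho)$ with the case split governed by the sign of $\omega_1(1-\beta)^2-2\omega_2(1-\beta)-12\omega_3+4\omega_4$), and the paper obtains this corollary from that theorem by specializing $m=\lambda=1$, $\gamma=0$ (via Corollaries 2.2--2.3). The only step you leave implicit is the routine algebra of that specialization, which does check out: $\omega_1=1728$, $\omega_2=288$, $\omega_3=144$, $\omega_4=192$ give $\tau=\frac{11-\sqrt{37}}{12}$, the first branch becomes $(1-\beta)^2\bigl[(1-\beta)^2+\tfrac{1}{2}\bigr]$, and the second becomes $\frac{(1-\beta)^2}{16}\cdot\frac{60\beta^2-84\beta-25}{9\beta^2-15\beta+1}$.
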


\begin{Remark}
    Corollary 2.4  recovers  a result in Altinkaya and Yalçin \cite[Corollary 4]{A26}.
\end{Remark}

\section{Concluding remarks}\label{sec:con}
In this investigation, we consider a constructed subclass $\Xi_{\Sigma_m}(\lambda, \gamma ; \beta)$ of the class $\Sigma_m$ of $m$-fold symmetric bi-univalent functions and several properties of the results are discussed. Moreover, with a  specialization of the parameters, some consequences of the class are mentioned and they improve some existing upper bounds for $\mathrm{H}_{2}(2)$  on certain subclasses of 1-fold symmetric bi-univalent functions.

\section*{Data Availability}
Data sharing not applicable to this article as no datasets were generated or analyzed during the current study.

\section*{Funding}
Not applicable.

\section*{Author's contributions}
Conceptualization, P.O.S.; Data curation, S.J.M.F.; Formal analysis, N.C.; Funding acquisition, N.C.; Investigation, P.O.S., R.P.A., P.O.M., N.C. and T.A.; Methodology, P.O.S, R.P.A., N.C.and T.A.; Project administration, R.P.A.; Software, P.O.S., S.J.M.F. and P.O.M.; Supervision, T.A.; Validation, R.P.A. and P.O.M.; Visualization, P.O.S.; Writing – original draft, P.O.S., R.P.A. and P.O.M.; Writing – review \& editing, P.O.S. and R.P.A. All of the authors read and approved the final manuscript.

\section*{Acknowledgements}
Researchers Supporting Project number (RSP2023R153), King Saud University, Riyadh, Saudi Arabia.

\section*{Declarations}

\noindent {\bf Ethical approval}\; Not applicable.

\noindent {\bf Competing interests}\; The authors declare no competing interests.


\end{document}